\def\sbmm#1{\ensuremath{\boldsymbol{#1}}}          
\def\sdm#1{\ensuremath{\mathrm{#1}}}               
\def\sbv#1{\ensuremath{\mathbf{#1}}}               
\def\x{\sbv{x}}
\def\s{\sbv{s}}
\def\y{\sbv{y}}
\def\u{\sbv{u}}
\def\bx{\sbmm{x}}
\def\by{\sbmm{y}}
\def\bu{\sbmm{u}}
\def\bw{\sbmm{w}}
\def\bc{\sbmm{c}}
\def\bY{\sbmm{Y}}
\def\bA{\sbmm{A}}
\def\bU{\sbmm{U}}
\def\bW{\sbmm{W}}
\def\bC{\sbmm{C}}
\def\bS{\sbmm{S}}
\def\bX{\sbmm{X}}
\def\bI{\sbmm{I}}
\def\bQ{\sbmm{Q}}
\def\C{\mathcal{C}}
\def\W{\mathcal{W}}
\def\X{\mathcal{X}}
\def\S{\mathcal{S}}
\def\A{\mathcal{A}}
\def\Rx{\widehat{\sbmm{R}}_{\x}}
\def\N{\mathbb{N}}
\def\R{\mathbb{R}}
\def\nfirst{\{1,\ldots,N\}}
\def\kfirst{\{1,\ldots,K\}}
\def\dom{\operatorname{dom}}
\def\Diag{\operatorname{Diag}}
\def\diag{\operatorname{diag}}
\def\tr{\operatorname{tr}}
\def\argmin{\operatorname{argmin}}
\def\min{\operatorname{min}}
\def\max{\operatorname{max}}
\def\prox{\operatorname{prox}}
\def\Jivareg{J_{\rm IVA-G}^{\text{Reg}}}
\newtheorem{theorem}{Theorem} 
\newtheorem{remark}{Remark}
\newtheorem{assumption}{Assumption} 
\newtheorem{lemma}{Lemma}
\begin{document}

\title{An Effective Iterative Solution for Independent Vector Analysis with Convergence Guarantees}
\author{Cl\'ement Cosserat,~\IEEEmembership{Student Member}~IEEE, Ben Gabrielson,~\IEEEmembership{Student Member},~IEEE, Emilie Chouzenoux, \IEEEmembership{Senior Member},~IEEE, Jean-Christophe Pesquet, \IEEEmembership{Fellow},~IEEE, and T\"ulay Adali, \IEEEmembership{Fellow}~IEEE}
\maketitle
  \thanks{}

\markboth{IEEE TRANSACTIONS ON SIGNAL PROCESSING, VOL.~XX, NO.~XX
}{Cosserat \MakeLowercase{\textit{et al.}}: Title}

\begin{abstract}
Independent vector analysis (IVA) is an attractive solution to address the problem of joint blind source separation (JBSS), that is, the simultaneous extraction of latent sources from several datasets implicitly sharing some information. Among IVA approaches, we focus here on the celebrated IVA-G model, that describes observed data through the mixing of independent Gaussian source vectors across the datasets. IVA-G algorithms usually seek the values of demixing matrices that maximize the joint likelihood of the datasets, estimating the sources using these demixing matrices. Instead, we write the likelihood of the data with respect to both the demixing matrices and the precision matrices of the source estimate. This allows us to formulate a cost function whose mathematical properties enable the use of a proximal alternating algorithm based on closed form operators with provable convergence to a critical point. After establishing the convergence properties of the new algorithm, we illustrate its desirable performance in separating sources with covariance structures that represent varying degrees of difficulty for JBSS.
\end{abstract}

\begin{IEEEkeywords}
IVA, $\textbf{PALM}$ Algorithm, Maximum-Likelihood, Blind Source Separation, Proximal methods
\end{IEEEkeywords}

%

\section{Introduction}
\fontsize{10pt}{11pt}\selectfont

\IEEEPARstart{T}{he} \emph{blind source separation} problem (BSS) aims at factorizing a data matrix as a product of a mixing matrix and a source data matrix.
BSS is thus a data-driven method to extract latent features from a dataset, which have a broad variety of uses. It offers a wide range of applications in signal processing and engineering including neuroimaging data analysis \cite{calhoun_unmixing_2006}, communications \cite{BSSwireless,contrastsforBSS}, remote sensing \cite{nielsen_multiset_2002}, to name a few  \cite{Jutten_Comon_handbook}. The latent sources are interpreted as physical quantities of interest that cannot be measured directly, e.g. brain connectivity networks \cite{calhoun_unmixing_2006,Adali2014,calhoun_multisubject_2012}, or can be used as features for further tasks such as classification \cite{electro-coulography}. The BSS problem generalizes to the \emph{joint blind source separation} problem (JBSS) when multiple datasets are analyzed jointly to benefit from their shared information. JBSS is necessary to fully analyze datasets that share similarities. This case presents itself when the datasets contain measures of a same phenomenon for different subjects \cite{Adali2014,gabrielson_joint-iva_2020}, different measurement methods \cite{multimodal_data_fusion}, or more generally for various modalities \cite{boukouvalas_independent_2021}. Allowing the interaction between the datasets leads JBSS to achieve more accurate separation than multiple separate BSS in general \cite{Adali2014,Anderson2012}.

One way to address BSS is to model the rows of the source dataset as samples of mutually independent random variables or random process called sources and the datasets as mixtures of independent sources \cite{IndependenceMeasure}. This approach is known as \emph{independent component analysis} (ICA) \cite{comon1994independent}, which has been one of the most popular ways to achieve ICA due to its uniqueness guarantees under very general conditions. ICA can be generalized to \emph{independent vector analysis} (IVA) \cite{Adali2014,Anderson2012,Anderson_IVA_IC_PB,adali_ica_2019,KimIVA} to address the JBSS problem. In this case, sources are considered as random vectors (or random process vectors), called \emph{source component vectors} (SCVs). Each entry of a SCV accounts for one source in a given dataset. In IVA, now the SCVs are assumed to be independent rather than univariate sources as in ICA. Within each SCV, the statistical dependence (correlation for IVA-G) across the datasets are taken into account. IVA methods gather a family of algorithms that aim at recovering the SCVs and mixing coefficients that generated the observed data. The JBSS problem can be addressed with many other methods such as groupICA \cite{calhoun_multisubject_2012,adali_ica_2019}, or joint ICA \cite{jointICA_original}, but IVA is more powerful as it offers a greater flexibility and helps preserve the individual variability represented by each dataset, e.g., in multisubject analyses \cite{subject_variability,laney2015capturing}. Moreover, it enables a common factorization of the datasets without needing to realign the sources $a ~ posteriori$, which can be costly, thus alleviating the inherent permutation ambiguity across the datasets. 

IVA is typically formulated using a \emph{maximum of likelihood} (ML) estimator, which means that the estimation is done by solving an optimization problem where the cost-function is derived from the log-likelihood of the data. In \cite{Anderson2012}, we see IVA-G presented as a minimization of the mutual information of the SCVs to maximize their independence, those two formulations are equivalent when the number of samples tends to infinity \cite{Adali2014}.

In this paper, we focus on the case of non-degenerate, centered Gaussian SCVs, placing ourselves in the so-called IVA-G framework, as presented in \cite{Anderson2012}. This model is convenient, as the SCVs are fully described by their covariance matrix (or equivalently by their precision matrix). Besides, since IVA-G algorithms only make use of \emph{second order statistics} (SOS) of the data, this makes them the simplest and most efficient among the IVA algorithms. In the Gaussian case, having the demixing matrices gives a natural estimate of the covariance matrices of the SCVs through their empirical covariance, this is why the cost function in \cite{Anderson2012} only depends on the demixing matrices. In this work however, we choose to write the ML such that it explicitly takes the SCVs \emph{probability density function} (pdf) as an input variable, to benefit from analytical properties of the resulting cost.

So far, the IVA-G algorithms are based on standard optimization methods, like Newton's method or gradient descent, without demonstration of explicit convergence guarantees. In this article, we show that our proposed cost function to jointly search for the Gaussian SCVs and demixing matrices generalizes the cost function in \cite{Anderson2012}, in the sense that replacing the precision matrices of the SCVs by the inverse of their empirical covariance matrices for the given demixing matrices, we recover a cost function that only depends on those demixing matrices and that is equal to the one in \cite{Anderson2012} up to a constant. The choice of a cost function that jointly acts on the sought demixing and covariance matrices offers an attractive structure enabling the explicit incorporation of priors and constraints, and the use of mathematically sound minimization algorithms. Hence, we design a \emph{Proximal Alternating Linearized Minimization} algorithm ($\textbf{PALM}$), dedicated to IVA-G problem, and show that it converges to a critical point under some mild assumptions. 
The resulting scheme, denoted $\textbf{PALM-IVA-G}$, is also shown to be fast, and to establish a reliable estimation performance in practice, and at least as good as the state-of-the-art methods for IVA-G.

Our contributions can be summarized as follows. First, we provide a novel variational formulation for IVA-G, introducing a new cost function and establishing its mathematical properties. Second, building upon these properties, we design the $\textbf{PALM-IVA-G}$ algorithm, to solve the resulting minimization problem, and show its convergence under mild assumptions. Third, we numerically illustrate the desirable performance of our approach, in comparison with state-of-the-art approaches, on a number of scenarios that cover various degrees of dependence across the datasets.

The paper is organized as follows. Section \ref{sec:IVA-G} introduces the JBSS problem, the IVA-G framework to address it, and  presents our cost function based on maximum likelihood estimation. Section \ref{sec:Palm} then focuses on our proposition of an original iterative algorithm to solve the IVA-G problem. Convergence results are presented in Section \ref{sec:conv}. Numerical experiments assessing the validity and good performance of our approach are presented in Section \ref{sec:expe}. Section \ref{sec:conc} concludes the paper.

\section{The IVA-G problem}
\label{sec:IVA-G}
\subsection{Notation}
Throughout the paper, we use bold upper case symbols for matrices, bold lower case symbols for column or row vectors, calligraphic upper case symbol for tensors of order three or more, and regular lower case symbols for scalars. Italic font is used for deterministic quantities while regular one is used for random quantities. For instance, let $\u = [\sdm{u}_1,\ldots,\sdm{u}_N]^\top$ be an $\R^N$-valued random vector, for which we draw $V$ realisations that we stack, columnwise, in a matrix $\bU \in \R^{N \times V}$. The latter can also be written as $\bU = [\bu_1,\ldots,\bu_N]^\top$ with, for every $n \in \nfirst$, $\bu_n \in \R^V$ a column vector of $V$ realizations of the scalar random variable $\sdm{u}_n$.\\ 

The subset of non-singular (resp. symmetric) matrices of $\R^{N \times N}$ is denoted $\mathcal{GL}_N(\R)$ (resp. $\mathcal{S}_N$). $\sbmm{I}_N$ is the $N\times N$ identity matrix. We also denote $\mathcal{S}_N^+$ the set of positive semi-definite matrices of size $N$, and $\mathcal{S}_N^{++} = \mathcal{S}_N^+ \bigcap \mathcal{GL}_N(\R)$ the set of \emph{positive definite} (PD) matrices. $\|\cdot\|$ denotes the Frobenius norm for any vector, matrix, and tensor of order three or more.\\

For any matrix $\sbmm{A}$, we denote by $\| \sbmm{A}\|_{\rm S}$ its spectral norm, equal to the largest singular value of $\sbmm{A}$. For every $n \in \{1,\ldots,N\}$, $\sbmm{a}_n^\top$ denotes the $n$-th row of $\sbmm{A}$. If $\sbmm{A}$ is a square matrix, we note $\boldsymbol{\sigma}_{\sbmm{A}} = (\sigma_{\sbmm{A},l})_{1 \leq l \leq N} \in \R^N$ the vector of its singular values, $\tr(\sbmm{A})$ its trace, and $\det(\sbmm{A})$ its determinant. $\diag(\sbmm{A}) \in \R^N$ is the vector whose entries are the diagonal coefficients of $\sbmm{A}$, whereas for $\sbmm{a} \in \R^N$, $\Diag(\sbmm{a})$ (with an upper case) is the diagonal matrix whose coefficients are the components of $\sbmm{a}$. We also note $\Diag(\sbmm{A})$ the diagonal matrix whose diagonal coefficients are the same as those of $\sbmm{A}$. Finally, for any $(N,M)\in (\mathbb{N}\setminus\{0\})^2$, we consider the matricial scalar product defined as $(\forall (\sbmm{A}, \sbmm{B}) \in (\R^{N \times M})^2) ~ \langle \sbmm{A} \mid \sbmm{B} \rangle = \tr(\sbmm{A}^\top \sbmm{B})$.


\subsection{JBSS problem}

Let $K,N,V$ be positive integers, we consider $K$ datasets denoted $(\bX^{[k]})_{1 \leq k \leq K}$ where $\forall k \in \kfirst, \bX^{[k]} \in \R^{N \times V}$ is a real-valued matrix. For instance, in an fMRI analysis problem, for $n \in \{1,\ldots,M\}$, and $k \in \kfirst$, the $n$-th row of $\bX^{[k]}$, $(\bx_n^{[k]})^\top$ could model the blood-oxygen-level-dependent (BOLD) contrast in the $V$ voxels of a 3D model of the brain, measured at acquisition time $n$ for the $k$-th subject, within a cohort of $K$ patients \cite{calhoun_unmixing_2006,calhoun_multisubject_2012}.
We assume that the observed datasets are obtained by a linear mixing of latent source datasets, i.e, $(\forall k \in \kfirst)$:

\begin{equation}
\label{matricial mixing equations}
     \bX^{[k]} = \bA^{[k]}\bS^{[k]} \in \R^{N \times V},
\end{equation}
where $\bA^{[k]} = \big( a^{[k]}_{m,n} \big)_{1 \leq m \leq N, 1 \leq n \leq N} \in \R^{N \times N}$ is a square non-singular mixing matrix and $\bS^{[k]} \in \R^{N \times V}$ is a latent matrix whose coefficients are typically interpreted as hidden features of the phenomenon described by $\bX^{[k]}$. The JBSS problem consists in estimating simultaneously for all $k \in \kfirst$, both $\bA^{[k]}$ and $\bS^{[k]}$ from $\bX^{[k]}$. We estimate the inverse of the mixing matrices by the so-called demixing matrices $(\bW^{[k]})_{1 \leq k \leq K}$, and deduce estimates $(\bY^{[k]})_{1 \leq k \leq K}$ for the source datasets $(\bS^{[k]})_{1 \leq k \leq K}$ by calculating
\begin{equation}
\label{build source estimates}
(\forall k \in \kfirst) \quad \bY^{[k]} = \bW^{[k]}\bX^{[k]}.
\end{equation} 

In a nutshell, using tensor notations, JBSS amounts to providing an
estimate $\mathcal{Y} = [\bY^{[1]},\ldots,\bY^{[K]}]$ of the source tensor $\S = [\bS^{[1]},\ldots,\bS^{[K]}]$ via the demixing tensor $\W = [\bW^{[1]},\ldots,\bW^{[K]}]$, which is an estimate of the slice-wise inverse of the mixing tensor $\mathcal{A} = [\bA^{[1]},\ldots,\bA^{[K]}]$, given the datasets $\mathcal{X} = [\bX^{[1]},\ldots,\bX^{[K]}]$. Here, $\S,\X$ and $\mathcal{Y} \in \R^{N \times V \times K}$ and $\A$ and $\W \in \R^{N \times N \times K}$.

\subsection{IVA model}

Assuming that the datasets share some information which can be leveraged to separate the sources more accurately than if dealt with individually, IVA \cite{KimIVA} models this interdependence of the $\bX^{[k]}$ through statistical links between the latent datasets. More precisely, the rows of the latent datasets with the same index are assumed to be correlated, while being independent from rows with different indices. To formalize this, IVA models the columns of the $\bX^{[k]}$ (resp. $\bS^{[k]}$) as independent samples from $\R^N$-valued random vectors $\x^{[k]} = [\sdm{x}_1^{[k]}, \ldots,\sdm{x}_N^{[k]}]^\top$ (resp. $\s^{[k]} = [\sdm{s}_1^{[k]}, \ldots,\sdm{s}_N^{[k]}]^\top$). We can thus rewrite the model in \eqref{matricial mixing equations} using random vector notations: $(\forall k \in \kfirst), \x^{[k]} = \bA^{[k]} \s^{[k]}$. Regrouping the components of the $\s^{[k]}$ with corresponding indices, we obtain $N$ all $\R^K$-valued random vectors, $\s_n = [\sdm{s}_n^{[1]}, \ldots,\sdm{s}_n^{[K]}]^\top$ for $n \in \{1,\ldots,N\}$ called source component vectors, where each entry of a SCV accounts for the corresponding dataset. The goal of IVA is to make the SCVs as independent as possible.

 IVA aims at recovering the sources by building demixing matrices $(\bW^{[k]})_{1 \leq k \leq K}$. At the same times, it builds estimated SCVs $(\y_n)_{1 \leq n \leq N}$ whose distributions have probability density functions respectively denoted $(p_{\y_n})_{1 \leq n \leq N}$ and that we suppose mutually independent. Similarly as for SCVs, we denote $(\forall n \in \nfirst), \y_n = [\sdm{y}_n^{[1]}, \ldots,\sdm{y}_n^{[K]}]^\top$ the estimated SCVs, and we reorganize the components to define $(\forall k \in \kfirst), \y^{[k]} = [\sdm{y}_1^{[k]}, \ldots,\sdm{y}_N^{[k]}]^\top$. Then, we see with \eqref{build source estimates} that for all $k \in \kfirst$, $\x^{[k]}$ is estimated by $(\bW^{[k]})^{-1} \y^{[k]}$, whose probability distribution is entirely determined by the demixing matrices and the $(p_{\y_n})_{1 \leq n \leq N}$. Said otherwise, $\W$ and $(p_{\y_n})_{1 \leq n \leq N}$ yield an estimated generative model for our observed datasets.

 \subsection{IVA-G cost function}




In the following, for every $n \in \{1,\ldots,N\}$, we denote by $\bY_n \in \R^{K \times V}$ the matrix obtained by stacking vertically the $n$-th rows of $\bY^{[k]}$ for $k \in \kfirst$. Therefore, using \eqref{build source estimates}, we have 
\begin{equation}
\label{link between x and y_n}
    (\forall n \in \nfirst) \quad \bY_n = \bW_n \bX,
\end{equation}
where
\begin{equation}
\bW_n =
\begin{pmatrix}
 \bw_n^{[1]\top} & 0 & \ldots & 0\\
 0 & \bw_n^{[2]\top} & \ldots & 0\\
 \vdots & \vdots & \ddots & \vdots\\
 0 & 0 & \ldots & \bw_n^{[K]\top}
\end{pmatrix} \in \R^{K \times KN}
\end{equation}
and 
\begin{equation}
\bX = 
\begin{pmatrix}
    \bX^{[1]}\\
    \vdots\\
    \bX^{[K]}
\end{pmatrix}
\in \R^{KN \times V}.
\end{equation}

The objective of IVA is to determine $\W$ and $(p_{\y_n})_{1 \leq n \leq N}$ that maximize the log-likelihood of $\bX$ in our estimated generative model, given by \cite{Anderson_IVA_IC_PB}
\begin{equation*}
\label{likelihood in general}
    \sum_{n=1}^N \log p_{\y_n}(\bY_n) + V \sum_{k=1}^K \log |\det \bW^{[k]}|.
\end{equation*}

In the Gaussian case, considered here, we model $(\y_n)_{1 \leq n \leq N}$ as centered non-degenerate Gaussian vectors, whose pdf is thus entirely determined by their covariance matrices, or equivalently, their precision matrices that we denote $(\bC_n)_{1\le n \le N}$. As done previously, to simplify our notation, we introduce the tensor $\C = [\bC_1,\ldots,\bC_N] \in (\S_K^{++})^N$ that gathers the estimated precision matrices of all the SCVs. Moreover, we assume sample independence, so the pdf of $\bY_n$ is the product of the pdfs of its columns. Under this Gaussian modeling, we have,
\begin{align}
\label{p_y_n}
&\nonumber (\forall \by \in \R^K)\\
&\log p_{\y_n}(\by) = \frac{1}{2} \log \det \bC_n - \frac{K}{2} \log 2 \pi - \frac{1}{2} \by^\top \bC_n \by.
\end{align}
Hence, the problem becomes equivalent to minimize, with respect to $\W,\C$, the cost function $J_{\rm IVA-G}(\W,\C)$
defined as
\begin{align}
\label{unregularized cost}
&J_{\rm IVA-G}(\W,\C)
+\frac{1}{2} \sum_{n=1}^N \log \det \bC_n+\sum_{k=1}^K \log |\det \bW^{[k]}|
\nonumber\\
&= \frac{1}{2}\sum_{n=1}^N \frac{1}{V} \tr(\bC_n \bY_n \bY_n^\top) 
\nonumber\\
&= \frac{1}{2}\sum_{n=1}^N \tr(\bC_n \bW_n \Rx \bW_n^\top) 
,
\end{align}
 where $\Rx = \frac{1}{V} \bX \bX^\top$ is the empirical covariance matrix of $\hat{\x}$.\\

Note that the domain of function \eqref{unregularized cost} can be extended to $\R^{N \times N \times K} \times \R^{K \times K \times N}$ by setting $J_{\rm IVA-G}(\W,\C) = +\infty$ if $\bW^{[k]}$ is singular for some $k \in \kfirst$ or if $\bC_n$ is not symmetric positive definite for some $n \in \nfirst$. 

\begin{remark}
As we will show in Section \ref{sec:conv}, the non-singularity of $\Rx$ is a sufficient condition to the lower-boundedness of \eqref{unregularized cost}, and as such, to the well-posedness of our optimization problem. In practice, for a large number of samples --- that is $V > KN$ --- drawn from a continuous probability distribution, the empirical covariance matrix $\Rx$ is non-singular almost surely. Hence, we will suppose that this condition holds in the remainder of this article.
\end{remark}

\subsection{IVA-G-V and IVA-G-N approaches}
\label{sec:ivagv}
In \cite{Anderson2012}, the authors proposed two algorithms, called \textbf{IVA-G-V} and \textbf{IVA-G-N}, to solve the IVA-G estimation problem. To do so, they reformulate the problem as the minimization of the following cost function
\cite{Anderson2012}:
\begin{equation}
\label{Anderson cost}
\widetilde{J}_{\rm IVA-G}(\W) = \frac{1}{2} \sum_{n=1}^N \log \det (\bW_n \Rx \bW_n^\top) - \sum_{k=1}^K \log |\det \bW^{[k]}|.
\end{equation}
These approaches implicitly estimate the covariance matrices of the sources by $\bC_n^{-1} = \bW_n \Rx \bW_n^{\top}$, that is, the empirical covariance matrices of the $(\y_n)_{1\leq n \leq N}$. The minimization of \eqref{Anderson cost} is usually performed using either a gradient-based, or a Newton-based solver, leading to \textbf{IVA-G-V} and \textbf{IVA-G-N} schemes, respectively. To the best of our knowledge, these algorithms do not benefit from strong guarantees in terms of convergence of the iterates.

It is easy to show that finding $(\W,\C)$ that minimizes the proposed objective function \eqref{unregularized cost} is actually equivalent to finding $\W$ that minimizes the cost \eqref{Anderson cost} used in state-of-the-art IVA-G approaches, as stated in the following known result. 




\begin{theorem}[Equivalence between $J_{\rm IVA-G}$ and $\widetilde{J}_{\rm IVA-G}$]\
\label{equivalence ML MI}

For a given $\W \in \R^{N \times N \times K}$, if all the $\bW^{[k]}$ are non-singular, then 
\begin{enumerate}
    \item[(i)] $J_{\rm IVA-G}(\W,.)$ is minimized over $\R^{K \times K \times N}$ at
    $\widehat{\C}(\W)  = 
 [\widehat{\bC}_1(\bW_1),\ldots,\widehat{\bC}_N(\bW_N)]$ where
 \begin{equation}
    (\forall n \in \{1,\ldots,N\})\,\, \widehat{\bC}_n(\bW_n) = (\bW_n \Rx \bW_n^{\top})^{-1}.
 \end{equation}
    \item[(ii)] We have, for every $\W \in \R^{N \times N \times K}$:
    \begin{equation}
    \underset{\C \in (\mathcal{S}_K^{++})^N}{\min} J_{\rm IVA-G}(\W,\C) = \widetilde{J}_{\rm IVA-G}(\W) + \frac{KN}{2}.
\end{equation}
\end{enumerate}
 
\end{theorem}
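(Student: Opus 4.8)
The plan is to exploit the additive separability of the cost across the SCV index $n$. Writing $\sbmm{R}_n := \bW_n \Rx \bW_n^\top$, the definition \eqref{unregularized cost} gives
$J_{\rm IVA-G}(\W,\C) = \sum_{n=1}^N f_n(\bC_n) - \sum_{k=1}^K\log|\det\bW^{[k]}|$, where $f_n(\bC_n) = \tfrac12\tr(\bC_n\sbmm{R}_n) - \tfrac12\log\det\bC_n$. Since the $\W$-only term is constant in $\C$ and the extended-valued $J_{\rm IVA-G}$ equals $+\infty$ outside $(\S_K^{++})^N$, minimizing $J_{\rm IVA-G}(\W,\cdot)$ over $\R^{K\times K\times N}$ reduces to minimizing each $f_n$ independently over $\S_K^{++}$, which is a standard Gaussian precision-estimation subproblem.

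First I would verify that $\sbmm{R}_n$ is symmetric positive definite, so that $\sbmm{R}_n^{-1}$ exists and lies in $\S_K^{++}$. Since each $\bW^{[k]}$ is non-singular, none of its rows vanish, hence every block $\bw_n^{[k]\top}$ of the block-diagonal $\bW_n \in \R^{K\times KN}$ is non-zero and $\bW_n$ has full row rank $K$. Combined with $\Rx \in \S_{KN}^{++}$ (the empirical covariance is PSD and assumed non-singular, per the Remark), this yields $\sbmm{R}_n = \bW_n\Rx\bW_n^\top \in \S_K^{++}$.

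Next I would solve $\min_{\bC_n\in\S_K^{++}} f_n(\bC_n)$. Using the matrix derivatives $\nabla_{\bC_n}\tr(\bC_n\sbmm{R}_n) = \sbmm{R}_n$ and $\nabla_{\bC_n}\log\det\bC_n = \bC_n^{-1}$ on the open cone, the stationarity condition $\tfrac12(\sbmm{R}_n - \bC_n^{-1}) = 0$ gives the unique critical point $\bC_n = \sbmm{R}_n^{-1} = \widehat{\bC}_n(\bW_n)$. To promote this to a genuine global minimizer I would invoke convexity: $f_n$ is strictly convex on $\S_K^{++}$ since $\bC_n\mapsto-\log\det\bC_n$ is strictly convex and $\bC_n\mapsto\tr(\bC_n\sbmm{R}_n)$ is linear; together with coercivity on the open cone (the barrier $-\log\det\bC_n\to+\infty$ on the boundary, while the linear term, controlled by $\lambda_{\min}(\sbmm{R}_n)>0$, dominates the logarithmic growth as $\|\bC_n\|\to\infty$), this ensures the infimum is attained in the interior at the unique stationary point $\widehat{\bC}_n(\bW_n)$, proving (i).

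Finally, (ii) follows by direct substitution. Evaluating at the minimizer, $f_n(\sbmm{R}_n^{-1}) = \tfrac12\tr(\sbmm{R}_n^{-1}\sbmm{R}_n) - \tfrac12\log\det(\sbmm{R}_n^{-1}) = \tfrac{K}{2} + \tfrac12\log\det\sbmm{R}_n$, using $\tr(\bI_K)=K$ and $\log\det(\sbmm{R}_n^{-1}) = -\log\det\sbmm{R}_n$. Summing over $n$ and reattaching the $\W$-only term gives $\min_\C J_{\rm IVA-G}(\W,\C) = \tfrac{NK}{2} + \tfrac12\sum_{n=1}^N\log\det(\bW_n\Rx\bW_n^\top) - \sum_{k=1}^K\log|\det\bW^{[k]}|$, whose last two terms coincide with $\widetilde{J}_{\rm IVA-G}(\W)$ from \eqref{Anderson cost}; hence $\min_\C J_{\rm IVA-G}(\W,\C) = \widetilde{J}_{\rm IVA-G}(\W) + \tfrac{NK}{2}$. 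The only genuinely non-routine points are the positive-definiteness of $\sbmm{R}_n$ and the rigorous passage (strict convexity plus coercivity on an open cone) from stationarity to global optimality; the remaining steps are standard matrix algebra.
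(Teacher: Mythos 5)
Your proof is correct and is precisely the standard argument the paper relies on: it states Theorem~\ref{equivalence ML MI} as a known, "easy to show" result and provides no proof of its own, so there is nothing to diverge from. Your treatment of the two points that actually need care --- positive definiteness of $\bW_n \Rx \bW_n^\top$ via full row rank of $\bW_n$ together with non-singularity of $\Rx$, and the passage from stationarity to a global minimizer via strict convexity and coercivity of $\bC_n \mapsto \tfrac12\tr(\bC_n \bW_n \Rx \bW_n^\top) - \tfrac12\log\det\bC_n$ on the open cone $\mathcal{S}_K^{++}$ --- is sound, and the final substitution yielding the additive constant $\tfrac{KN}{2}$ matches the statement.
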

 

Theorem \ref{equivalence ML MI} shows that $\widehat{\W}$ is a minimizer for $\widetilde{J}_{\rm IVA-G}$ if and only if $(\widehat{\W},\widehat{\C}(\widehat{\W}))$ is a minimizer for $J_{\rm IVA-G}$, hence the equivalence. The advantage of relying on the proposed cost function that takes $\C$ as an input block variable is that it offers a more structured form, allowing an efficient use of a block alternating minimization scheme, and benefiting from sounded convergence, as we will show in Section \ref{sec:Palm}.

\subsection{Ambiguities in IVA-G model}
    There are two ambiguities in the IVA-G generative model, that correspond to information on the parameters that cannot be deduced from the observed data. The first one is the \emph{permutation ambiguity}. New mixing matrices can be obtained by renumbering of the SCVs, that is, by defining $\widetilde{\bA}^{[k]} = \bA^{[k]}\sbmm{P}$ with $\sbmm{P}$ a permutation matrix. This change however does not modify the value of the cost function \eqref{unregularized cost}. The second one is the \emph{scaling ambiguity}.    
For every $k \in \kfirst$ and $n \in \nfirst$, and for any $\alpha_n^{[k]} \in \R \setminus \{0\}$, we can replace $\s_n^{[k]}$ with $\widehat{\s}_n^{[k]} = \alpha_n^{[k]} \s_n^{[k]}$ and $a_{m,n}^{[k]}$ with $\hat{a}_{m,n}^{[k]} = (\alpha_n^{[k]})^{-1} a_{m,n}^{[k]}$ for every $m \in \nfirst$. Those transformations let the random vectors $\x_n^{[k]}$ unchanged, and consequently, they do not affect the likelihood expression. Hence, the demixing matrices can only estimate the inverse of the ground truth mixing matrices, up to the permutation and scaling ambiguity. The latter ambiguity moreover raises the problem that a minimizing sequence of $J_{\rm IVA-G}$ is not necessarily bounded. This motivates our proposition for a regularized version for the cost.

\subsection{Proposed regularized cost function}

Let us note, for all $\C \in (\S_K^{++})^N$ and $n \in \nfirst, (c_{n,k,k})_{1 \leq k \leq K} = \diag(\bC_n)$. Due to the scaling ambiguity raised above, for any $(\W,\C)$, it is possible to rescale the coefficients to define $(\W',\C')$ such that $J_{\rm IVA-G}(\W',\C') = J_{\rm IVA-G}(\W,\C)$ and that
\begin{equation}
\label{diagonal constraint}
    (\forall n \in \nfirst)(\forall k \in \kfirst) \quad c'_{n,k,k} = 1.
\end{equation}

To do this, one can set, for each $(k,n)$, $\alpha_n^{[k]} = \frac{1}{\sqrt{c_{n,k,k}}}$. As a consequence, if a minimizer of $J_{\rm IVA-G}$ exists, then there exists at least another minimizer, satisfying $c_{n,k,k} = 1$ for all $(k,n)$. To mitigate this ambiguity, we thus propose to add a quadratic penalty to the cost function to control the distance to $1$ of the diagonal coefficients of the precision matrices.\footnote{In \textbf{IVA-G-V} and \textbf{IVA-G-N}, the scale ambiguity is managed by an ad-hoc renormalizing of the rows of the demixing matrices after each iteration of the minimization solver.} \\

\begin{remark}
Let us notice that a minimizer of $J_{\rm IVA-G}$, such that $c_{n,k,k} = 1$ has no reason to be qualitatively better than any other minimizer of $J_{\rm IVA-G}$. The proposed regularization aims at reducing the number of distinct minima, and at ensuring some mathematical properties of the cost we will leverage  to prove the convergence of our optimization algorithms. It is still possible, once convergence is reached, to rescale the sources a posteriori.
\end{remark}

In addition to the scaling penalty term, we also propose to add an extra term to the cost function, to constrain the singular values of the recovered precision matrices to be positive by a minimum margin. This term aims at avoiding numerical issues that could arise at the boundary of the logarithm determinant definition domain, without damaging the quality of the results. In practice, the constraint is simply imposed, by adding an indicator function $\iota_{[\epsilon,+\infty)^K}$, 
equal to $0$ for non-negative entries, $+\infty$ otherwise.
Our objective is
to impose that the components of the vector
of singular values
$\boldsymbol{\sigma}_{\bC_n}\in \mathbb{R}^K$
of matrix $\bC_n$ are above a certain $\epsilon > 0$, for every $n$. We thus obtain the final form for our proposed (regularized) cost function, denoted by $J_{\rm IVA-G}^{\text{Reg}}$:

\begin{align}
\label{Jiva reg}
&(\forall (\W,\C) \in (\mathcal{GL}_N(\R)^K \times (\S_K^{++})^N))\nonumber\\
    &J_{\rm IVA-G}^{\text{Reg}}(\W,\C) = 
    J_{\rm IVA-G}(\W,\C)
    + \frac{\alpha}{2} 
    \sum_{n=1}^N \|\operatorname{diag}(\bC_n)-\boldsymbol{1}_K\|^2\nonumber\\
    &\qquad\qquad\qquad\;\;+ \sum_{n=1}^N 
 \iota_{[\epsilon,+\infty)^K}
 (\boldsymbol{\sigma}_{\bC_n}),
\end{align}
$\alpha > 0$ is an hyper-parameter that controls the strength of the introduced  regularization.


Our IVA-G method then aims to minimize \eqref{Jiva reg}. This is a challenging non-convex and non-smooth problem. The next section is dedicated to discuss further the properties of \eqref{Jiva reg}, and to design an efficient optimization algorithm to find a critical point of it.


\section{Proposed minimization algorithm}
\label{sec:Palm}

\subsection{Mathematical properties of \texorpdfstring{$J_{\rm IVA-G}^{\text{Reg}}$}{J}}

In order to design an appropriate minimization algorithm for \eqref{Jiva reg}, let us examine the structure and properties of the cost function $J_{\rm IVA-G}^{\text{Reg}}$. First, let us remark that minimizing \eqref{Jiva reg} on $\mathcal{GL}_N(\R)^K \times (\mathcal{S}_K^{++})^N$ is equivalent to 
\begin{equation}
\label{eq:palmpb}
    \operatornamewithlimits{minimize}_{(\W,\C) \in \R^{N \times N \times K} \times \R^{K \times K \times N}} h(\W,\C) + f(\W) + g(\C),
\end{equation}
with
\begin{align}
    2 h(\W,\C) &= \sum_{n=1}^{N}\tr (\bC_n \bW_n \Rx \bW_n^\top)  + \alpha \|\operatorname{diag}(\bC_n)-\boldsymbol{1}_K\|^2 ,\label{eq:hpalm}\\
    f(\W) \ &= 
    \begin{cases}
      & - \sum_{k=1}^{K} \log |\det \bW^{[k]}|\\
      &\quad\text{if $(\forall k \in \kfirst) \, \bW^{[k]} \in \mathcal{GL}_N(\R)$}\\
      & +\infty ~ \text{otherwise}, \label{eq:fPALM}
    \end{cases}\\
    g(\C) &= 
    \begin{cases}
      & - \frac{1}{2} \sum_{n=1}^{N} \log \det \bC_n\\
      &\quad\text{if $(\forall n \in \nfirst) \, \bC_n-\epsilon \bI_K \in  \S^+_K$}\\
      & +\infty ~ \text{otherwise}. \label{eq:gPALM}
    \end{cases}
\end{align}
As we already mentioned, $\epsilon > 0$ serves as obtaining better regularity for the cost function (with closed definition domain). It is typically taken very small (e.g., $\epsilon = 10^{-12}$ in our experiments) to ensure the problem is essentially equivalent to a search for a maximum of the likelihood. A similar strategy was adopted in \cite{chouzenoux_optimal_2019}.
Formulation \eqref{eq:palmpb} has the advantage of isolating a differentiable term $h$ acting on both set of variables $(\W,\C)$, and two non differentiable terms, $f$ and $g$, acting separately on $\W$ or $\C$.

\paragraph{Function $h$}
Let us first study function $h$ acting jointly on $\W$ and $\C$ variables. We state the following lemmas, regarding the expression and smoothness properties, of its partial gradients, with respect to the first or second entry, the other being fixed. 

\begin{lemma}
The partial gradient of $h$ with respect to variable $\W$, evaluated at $(\W,\C) \in \R^{N \times N \times K} \times \R^{K \times K \times N}$, reads:
    \begin{equation}
\label{grad_W J}
\nabla_\W h(\W,\C) = (\nabla_{\bW^{[k]}}h(\W,\C))_{1 \leq k \leq K} \in \R^{N \times N \times K},
\end{equation}
where, for all $k \in \kfirst$,
\begin{align*}
&\nabla_{\bW^{[k]}}h(\W,\C) =
\Big(\frac{\partial h(\W,\C)}{\partial w_{n,m}^{[k]}}\Big)_{1\le n,m\le N}\nonumber\\
= &\begin{pmatrix}
[\bC_1\bW_1\Rx]_{k,(k-1)N+1}& \ldots &
[\bC_1\bW_1\Rx]_{k,(k-1)N+N}\\
\vdots & & \vdots\\
[\bC_N\bW_N\Rx]_{k,(k-1)N+1}& \ldots &
[\bC_N\bW_N\Rx]_{k,(k-1)N+N}
\end{pmatrix}.
\end{align*}
Moreover, for every $\C \in  \R^{K \times K \times N}$,  $\nabla_\W h(\cdot,\C)$ is Lipschitz continuous, with modulus
\begin{equation}
\label{eq:lipW}
L_{\W}(\C) = \rho_{\C} \varrho_{\Rx}, 
\end{equation}
 \label{lem:gradW}
where $\Rx^{[k]}$ is the matrix obtained by extraction of the columns $kN+1$ to $(k+1)N$ of $\Rx$,
\begin{equation}
\label{Rx}
\varrho_{\Rx} = \underset{1 \leq k \leq K}{\max} \|\Rx^{[k]}\|_{\rm S}
\end{equation}
and
\begin{equation}
\label{rho_C}
(\forall \C \in \R^{K \times K \times N}) \quad 
    \rho_{\C} = \underset{1 \leq n \leq N}{\max} \|\bC_n\|_{\rm S}.
\end{equation}
\end{lemma}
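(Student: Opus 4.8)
The plan is to treat the two assertions of the lemma separately: first obtain the partial gradient by standard matrix calculus, then exploit the fact that $h$ is quadratic in $\W$ to read off the Lipschitz modulus. For the gradient, I would start from \eqref{eq:hpalm} and note that the penalty $\alpha\|\diag(\bC_n)-\boldsymbol{1}_K\|^2$ does not depend on $\W$, so only the quadratic trace term contributes. For fixed $n$ and $\C$, the map $\bW_n \mapsto \tr(\bC_n \bW_n \Rx \bW_n^\top)$ is a quadratic form whose gradient with respect to the full matrix $\bW_n$ is $2\bC_n \bW_n \Rx$, using the symmetry of both $\bC_n$ and $\Rx$. The only subtlety is that the genuine variables are the entries $w_{n,m}^{[k]}$, which occupy position $(k,(k-1)N+m)$ of the block-diagonal matrix $\bW_n$ and appear only in the $n$-th summand of $h$; extracting the corresponding entries of $2\bC_n\bW_n\Rx$ and accounting for the factor $\frac{1}{2}$ relating $h$ and $2h$ yields exactly the announced coefficient $[\bC_n\bW_n\Rx]_{k,(k-1)N+m}$.

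For the Lipschitz estimate, I would use that $\W \mapsto \nabla_\W h(\W,\C)$ is affine, so that for any $\W,\W'$ its increment is the underlying linear map applied to $\Delta\W = \W-\W'$, with component $(k,n,m)$ equal to $[\bC_n\Delta\bW_n\Rx]_{k,(k-1)N+m}$, where $\Delta\bW_n$ is the block-diagonal matrix built from the $n$-th rows of the $\Delta\bW^{[k]}$. I would then bound the squared Frobenius norm of this tensor by grouping terms first over $n$ and then over $k$: for fixed $n$ and $k$, the row $([\bC_n\Delta\bW_n\Rx]_{k,(k-1)N+m})_{1\le m\le N}$ is the $k$-th row of $\bC_n\Delta\bW_n$ multiplied by the column block $\Rx^{[k]}$ of $\Rx$ (columns $(k-1)N+1$ to $kN$). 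Hence its squared norm is at most $\|\Rx^{[k]}\|_{\rm S}^2 \le \varrho_{\Rx}^2$ times the squared norm of that row, and summing over $k$ recovers $\varrho_{\Rx}^2\,\|\bC_n\Delta\bW_n\|^2$. Submultiplicativity of the spectral norm then gives $\|\bC_n\Delta\bW_n\| \le \rho_{\C}\,\|\Delta\bW_n\|$.

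Finally, since $\Delta\bW_n$ is block-diagonal with blocks $\Delta\bw_n^{[k]\top}$, one has $\|\Delta\bW_n\|^2 = \sum_{k=1}^K\|\Delta\bw_n^{[k]}\|^2$, and summing over $n$ reassembles $\sum_{n=1}^N\sum_{k=1}^K\|\Delta\bw_n^{[k]}\|^2 = \|\Delta\W\|^2$. Chaining the inequalities yields $\|\nabla_\W h(\W,\C)-\nabla_\W h(\W',\C)\| \le \rho_{\C}\varrho_{\Rx}\|\W-\W'\|$, i.e. the modulus \eqref{eq:lipW}. The main obstacle I anticipate is purely one of bookkeeping: keeping the index correspondence between the free entries $w_{n,m}^{[k]}$ and the block-diagonal positions of $\bW_n$ straight, and ensuring that it is the column block $\Rx^{[k]}$ — not the full $\Rx$ — that governs the estimate, since this is precisely what makes $\varrho_{\Rx}$ rather than $\|\Rx\|_{\rm S}$ appear in the Lipschitz constant.
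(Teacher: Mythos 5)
Your proposal is correct and follows essentially the same route as the paper's proof: the entrywise partial derivative is obtained from the quadratic trace term via the sparsity pattern of $\bW_n$ (only position $(k,(k-1)N+m)$ is free), and the Lipschitz modulus comes from bounding the Frobenius norm of the linear gradient map row by row, with the column block $\Rx^{[k]}$ contributing $\varrho_{\Rx}$ and the submultiplicative bound $\|\bC_n\bW_n\|\le\|\bC_n\|_{\rm S}\|\bW_n\|$ contributing $\rho_{\C}$, before reassembling $\sum_{n,k}\|\Delta\bw_n^{[k]}\|^2=\|\Delta\W\|^2$. Your explicit application of the bound to the increment $\Delta\W$ rather than to $\W$ itself is a marginally cleaner phrasing of the same linearity argument, and your indexing of the column block (columns $(k-1)N+1$ to $kN$) is the one consistent with the entry index $(k-1)N+m$.
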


\begin{proof}
See Appendix \ref{proof gradW}.
\end{proof}

\begin{lemma}
The partial gradient of $h$ with respect to $\C$, evaluated at $(\W,\C) \in \R^{N \times N \times K} \times \R^{K \times K \times N}$, reads:
    \begin{equation}
\label{grad_C J}
\nabla_\C h(\W,\C) = (\nabla_{\bC_n} h(\W,\C))_{1 \leq n \leq N} \in \R^{K \times K \times N},
\end{equation}
where, for all $n \in \nfirst$,
\begin{align*}
\nabla_{\bC_n} h(\W,\C) = \frac{1}{2} \bW_n \Rx \bW_n^\top + \alpha (\Diag(\bC_n) - \sbmm{I}_K).
\end{align*}
Moreover, for every $\W \in \R^{N \times N \times K}$, $\nabla_{\C}h(\W,\cdot)$ is Lipschitz continuous with constant modulus 
\begin{equation}
L_{\C} = \alpha. \label{eq:lipC}
\end{equation}
 \label{lem:gradC}
\end{lemma}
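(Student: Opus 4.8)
The plan is to treat the two assertions in turn, deriving the closed form of $\nabla_\C h(\W,\C)$ first and then reading off the Lipschitz modulus directly from it.

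For the gradient, I would exploit the fact that, for fixed $\W$, the map $\C \mapsto h(\W,\C)$ is separable across the block index $n$: from $2h(\W,\C) = \sum_{n=1}^N \tr(\bC_n \bW_n \Rx \bW_n^\top) + \alpha\,\|\diag(\bC_n) - \boldsymbol{1}_K\|^2$, only the $n$-th summand depends on $\bC_n$, so it suffices to differentiate that summand with respect to $\bC_n \in \R^{K\times K}$. Setting $\boldsymbol{M}_n = \bW_n \Rx \bW_n^\top$, the term $\tfrac12\tr(\bC_n \boldsymbol{M}_n)$ is linear in $\bC_n$, hence its gradient is $\tfrac12 \boldsymbol{M}_n^\top$; since $\Rx$ is symmetric, $\boldsymbol{M}_n$ is symmetric and this equals $\tfrac12 \bW_n \Rx \bW_n^\top$. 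The penalty $\tfrac{\alpha}{2}\|\diag(\bC_n) - \boldsymbol{1}_K\|^2 = \tfrac{\alpha}{2}\sum_{k=1}^K (c_{n,k,k}-1)^2$ involves only the diagonal entries of $\bC_n$, so its partial derivatives vanish off the diagonal and equal $\alpha(c_{n,k,k}-1)$ on it, i.e. the gradient is $\alpha(\Diag(\bC_n) - \bI_K)$. Adding the two contributions gives the announced expression for $\nabla_{\bC_n} h(\W,\C)$.

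For the Lipschitz estimate, the key remark is that the formula just obtained is affine in $\C$: the term $\tfrac12 \bW_n \Rx \bW_n^\top$ does not depend on $\C$, and only $\alpha \Diag(\bC_n)$ varies. Thus, for any $\C,\C' \in \R^{K\times K\times N}$, one has $\nabla_{\bC_n} h(\W,\C) - \nabla_{\bC_n} h(\W,\C') = \alpha\,\Diag(\bC_n - \bC_n')$. Since $\Diag(\cdot)$ is the orthogonal projection onto diagonal matrices, it is nonexpansive for the Frobenius norm, so $\|\Diag(\bC_n - \bC_n')\| \le \|\bC_n - \bC_n'\|$. Squaring, summing over $n$, and using $\|\C\|^2 = \sum_{n=1}^N \|\bC_n\|^2$ then yields $\|\nabla_\C h(\W,\C) - \nabla_\C h(\W,\C')\| \le \alpha\,\|\C - \C'\|$, which is exactly $L_\C = \alpha$.

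I do not anticipate a genuine obstacle: the statement is essentially a bookkeeping computation. The only points deserving mild care are to verify that $\boldsymbol{M}_n$ is symmetric, so that differentiating the trace term over the full space $\R^{K\times K}$ produces the symmetric matrix $\tfrac12 \bW_n \Rx \bW_n^\top$ rather than an asymmetric one, and to notice that the off-diagonal entries of $\bC_n$ do not enter the quadratic penalty, which is what confines the $\C$-dependence of the gradient to the diagonal and ultimately pins the Lipschitz constant at $\alpha$.
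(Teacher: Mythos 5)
Your proposal is correct and follows essentially the same route as the paper, which simply observes that $h(\W,\cdot)$ is quadratic so the gradient expression and the Lipschitz modulus $\alpha$ follow directly; you have merely spelled out the bookkeeping (linearity of the trace term, diagonal-only dependence of the penalty, nonexpansiveness of $\Diag(\cdot)$) that the paper leaves implicit.
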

\begin{proof}
See Appendix \ref{proof gradC}
\end{proof}

According to Lemmas \ref{lem:gradW} and \ref{lem:gradC}, both partial derivatives of $h$ are well defined and continuous with respect to $\W$ and $\C$, which shows in particular that $h$ is a $C^1$ function. 

\paragraph{Functions $f$ and $g$}

Functions $f$ and $g$ are proper (i.e., finite-valued at least at one point), and lower-semicontinuous. 
Furthermore, function $g$ is convex (see, for e.g., \cite[Example 24.66]{Bauschke2017}, for a proof), while function $f$ is not. Both functions $f$ and $g$ are not differentiable but still, it is possible to manipulate them efficiently, for minimization purpose, through their proximity operator \cite{combettes:hal-00643807}.\footnote{See also \url{https://proximity-operator.net/}} The following lemmas provide the expression for these operators, that will then be perused in our algorithm. The proofs for these lemmas mainly rely on the fact that both $f$ and $g$ are so-called spectral functions~\cite{benfenati_proximal_2018}, depending only on the spectral values of their inputs. Note that, despite the non-convexity of $f$, its proximity operator is still uniquely defined, as shown in the proof for Lemma \ref{proxW}.



\begin{lemma}
    \label{proxW}
Let $\W' \in  \R^{N \times N \times K}$, and some $c>0$. We define the proximity operator of $f$ at $\W'$ as 
    \begin{align}
\text{prox}_{c f} (\W' )& = 
\underset{\W \in \R^{N \times N \times K}}{\argmin} \frac{1}{2}\|\W - \W'\|^2 + c f(\W) \nonumber\\
& = (\sbmm{U}_{\bW^{'[k]}}\Diag(\boldsymbol{\sigma}_{\bW^{[k]}})\sbmm{V}_{\bW^{'[k]}})_{1 \leq k \leq K}, \label{prox cf formula}
\end{align}
where $(\forall k \in \{1,\ldots,K\})$
\resizebox{0.28\textwidth}{!}{$\bW^{'[k]} = \sbmm{U}_{\bW^{'[k]}}\Diag(\boldsymbol{\sigma}_{\bW^{'[k]}})\sbmm{V}_{\bW^{'[k]}}$} is the singular value decomposition of $\bW^{'[k]}$, and
\begin{equation}
    \boldsymbol{\sigma}_{\bW^{[k]}} = \frac{\boldsymbol{\sigma}_{\bW^{'[k]}} + \sqrt{\boldsymbol{\sigma}_{\bW^{'[k]}}^2 + 4c}}{2}.
\end{equation}
\end{lemma}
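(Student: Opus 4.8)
The plan is to exploit two structural features of $f$: it is \emph{separable} across the blocks indexed by $k$, and within each block it is \emph{orthogonally invariant}, i.e. a function of the singular values only. First I would observe that both the quadratic term and $f$ split across $k$, so that on the domain where every $\bW^{[k]}$ is non-singular one has $\frac12\|\W-\W'\|^2 + cf(\W) = \sum_{k=1}^K\big(\frac12\|\bW^{[k]}-\bW^{'[k]}\|^2 - c\log|\det\bW^{[k]}|\big)$. The minimization therefore decouples into $K$ independent problems, and it suffices to compute, for a fixed $\bW'\in\R^{N\times N}$, the minimizer over non-singular matrices of $\Phi(\bW) = \frac12\|\bW\|^2 - \langle\bW\mid\bW'\rangle - c\log|\det\bW|$, the term $\frac12\|\bW'\|^2$ being an additive constant.

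Next I would decouple singular vectors from singular values. Writing the SVD of $\bW$ and using $\|\bW\|^2 = \sum_l\sigma_{\bW,l}^2$ and $|\det\bW| = \prod_l\sigma_{\bW,l}$, the only term coupling $\bW$ to $\bW'$ is $-\langle\bW\mid\bW'\rangle = -\tr(\bW^\top\bW')$. By von Neumann's trace inequality, $\tr(\bW^\top\bW')\le\sum_l\sigma_{\bW,l}\,\sigma_{\bW',l}$ with singular values sorted decreasingly, equality holding when $\bW$ and $\bW'$ share a common system of singular vectors. This yields the lower bound $\Phi(\bW)\ge\sum_{l=1}^N\xi_l(\sigma_{\bW,l}) + \mathrm{const}$, where $\xi_l(\sigma) = \frac12\sigma^2 - \sigma\,\sigma_{\bW',l} - c\log\sigma$, a quantity depending only on the singular values of $\bW$.

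I would then solve the resulting scalar problems. Each $\xi_l$ is strictly convex on $(0,+\infty)$ since $\xi_l''(\sigma) = 1 + c/\sigma^2 > 0$, and it is coercive, with $\xi_l(\sigma)\to+\infty$ both as $\sigma\to0^+$ and as $\sigma\to+\infty$; hence it admits a unique minimizer, characterized by $\xi_l'(\sigma) = \sigma - \sigma_{\bW',l} - c/\sigma = 0$, that is, the unique positive root of $\sigma^2 - \sigma_{\bW',l}\sigma - c = 0$, namely $\sigma = \big(\sigma_{\bW',l}+\sqrt{\sigma_{\bW',l}^2+4c}\big)/2$. This is exactly the announced $\boldsymbol{\sigma}_{\bW^{[k]}}$. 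Choosing $\bW$ to share the singular vectors of $\bW'$ and to carry these optimal singular values attains the lower bound, so $\bW = \sbmm{U}_{\bW'}\Diag(\boldsymbol{\sigma}_{\bW^{[k]}})\sbmm{V}_{\bW'}$ is a global minimizer; reassembling over $k$ gives \eqref{prox cf formula}.

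The main obstacle, and the delicate point behind the single-valuedness claim despite the non-convexity of $f$, is \emph{uniqueness}. Strict convexity of each $\xi_l$ forces the optimal vector of singular values to be unique, and since the scalar map is increasing in $\sigma_{\bW',l}$ the optimal values inherit the ordering of those of $\bW'$, so the equality case of von Neumann is the only way the bound can be attained. It then remains to show the minimizing matrix itself is unique even when $\bW'$ has repeated singular values, a case where its SVD factors are not unique: on each singular subspace of $\bW'$ the optimal operator equals the common prox value times the same partial isometry, independently of the chosen orthonormal basis, so the resulting $\bW$ is basis-independent and uniquely determined. To legitimize these manipulations in full generality I would appeal to the fact that $f$ is a spectral function of the singular values in the sense of \cite{benfenati_proximal_2018}.
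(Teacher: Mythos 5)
Your proof is correct, and it reaches the same scalar subproblem as the paper but by a genuinely more self-contained route. After the identical block-separability step, the paper treats $\hat f(\sbmm{M})=-\log|\det\sbmm{M}|$ as a spectral function and invokes a black-box result (its Lemma~5, quoted from \cite{Pesquet}; see also \cite{benfenati_proximal_2018}) asserting that a prox of an orthogonally invariant function is obtained by applying the prox of the underlying permutation-invariant function to the singular values while retaining the singular vectors of $\bW^{'[k]}$; the only computation left is the one-dimensional minimization of $\tfrac12(\sigma-\sigma')^2-c\log\sigma$, which you both solve identically (positive root of $\sigma^2-\sigma'\sigma-c=0$). You instead inline the proof of that reduction via von Neumann's trace inequality, obtaining a lower bound depending only on the singular values and showing it is attained by aligning singular vectors; the monotonicity of $\sigma'\mapsto\bigl(\sigma'+\sqrt{(\sigma')^2+4c}\bigr)/2$, which you rightly flag, is exactly what guarantees the optimal singular values remain sorted so that the equality case of von Neumann is realizable. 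The paper's route buys brevity and reusability (the same lemma is recycled for $\prox_{cg}$ in Lemma~4); yours buys an elementary argument that makes the attainment and uniqueness mechanisms visible rather than delegated.

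One caveat on your uniqueness discussion: the basis-independence argument on degenerate singular subspaces is valid for repeated \emph{nonzero} singular values of $\bW^{'[k]}$, but it fails when $\bW^{'[k]}$ has a zero singular value, because the left and right singular vectors of the kernel block may then be rotated independently without changing $\bW^{'[k]}$. For instance $\bW^{'[k]}=0$ yields the whole orbit $\{\sqrt{c}\,\bQ:\bQ\ \text{orthogonal}\}$ as minimizers, so the prox is genuinely set-valued there. Single-valuedness therefore holds only for nonsingular $\bW^{'[k]}$ --- a restriction the paper silently shares, since its cited lemma only asserts membership in the prox set rather than uniqueness, and one that is harmless for the algorithm, whose iterates stay in $\mathcal{GL}_N(\R)^K$.
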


\begin{proof}
    See Appendix \ref{proof proxW}.
\end{proof}

\begin{lemma}
\label{proxC}
    Let $\C' \in \R^{K \times K \times N}$, and some $c>0$. The proximity operator of $g$ at $\C'$ is given by
    \begin{align}
\text{prox}_{c g} (\C' )& =
\underset{\W \in \R^{K \times K \times N}}{\argmin} \frac{1}{2}\|\C - \C' \|^2 + c g(\C) \nonumber \\
& = (\sbmm{U}_{\bC'_n}\Diag(\boldsymbol{\sigma}_{\bC_n})\sbmm{V}_{\bC'_n})_{1 \leq n \leq N}
\label{prox cg formula}
\end{align}

where $(\forall n \in \nfirst), \bC'_n =  \sbmm{U}_{\bC'_n}\Diag(\boldsymbol{\sigma}_{\bC'_n})\sbmm{V}_{\bC'_n}$ is the singular value decomposition of $\bC'_n$, and 
\begin{equation}
    \boldsymbol{\sigma}_{\bC_n} = \max\Bigg(\epsilon, \frac{\boldsymbol{\sigma_{\bC'_n}} + \sqrt{(\boldsymbol{\sigma}_{\bC'_n})^2+2c}}{2}\Bigg).
    \end{equation}

\end{lemma}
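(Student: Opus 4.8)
The plan is to mirror the strategy already used for Lemma~\ref{proxW}, exploiting that $g$ is separable across the blocks $\bC_n$ and that each block contribution is a spectral function. First I would observe that both terms defining $\prox_{cg}$ split over $n$: since $\|\C-\C'\|^2=\sum_{n=1}^N\|\bC_n-\bC'_n\|^2$ and $cg(\C)=\sum_{n=1}^N c\,g_0(\bC_n)$, where $g_0(\bC_n)=-\tfrac12\log\det\bC_n$ if $\bC_n-\epsilon\bI_K\in\S_K^+$ and $+\infty$ otherwise, the minimization defining the proximity operator decouples into $N$ independent problems, one per block. It therefore suffices to compute $\prox_{cg_0}(\bC'_n)$ for a single symmetric matrix and to stack the results. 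Since $g_0$ is convex (as recalled in the text) and the added quadratic term is strongly convex, each block problem admits a unique minimizer, which also settles the claimed uniqueness.

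Next I would identify $g_0$ as a spectral function. Writing $\phi(x)=-\tfrac12\log x+\iota_{[\epsilon,+\infty)}(x)$, we have $g_0(\bC_n)=\sum_{k=1}^K\phi(\lambda_k(\bC_n))$ for symmetric $\bC_n$, i.e. $g_0=\gamma\circ\boldsymbol{\lambda}$, with $\gamma\colon(\lambda_k)_k\mapsto\sum_k\phi(\lambda_k)$ permutation-invariant, proper and lower-semicontinuous, and $\boldsymbol{\lambda}$ the eigenvalue map. Invoking the calculus rule for proximity operators of spectral functions \cite{benfenati_proximal_2018} (as announced in the text preceding Lemma~\ref{proxW}), the minimizer is obtained by diagonalizing $\bC'_n$ and applying the scalar operator $\prox_{c\phi}$ to its spectrum while retaining the same orthonormal basis; for a symmetric positive definite argument this eigendecomposition coincides with the singular value decomposition $\bC'_n=\sbmm{U}_{\bC'_n}\Diag(\boldsymbol{\sigma}_{\bC'_n})\sbmm{V}_{\bC'_n}$ reported in the statement.

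It then remains to evaluate the one-dimensional operator. For $\lambda\in\R$ and $c>0$ I would minimize $x\mapsto\tfrac12(x-\lambda)^2-\tfrac{c}{2}\log x$ over $[\epsilon,+\infty)$. This map is strictly convex on $(0,+\infty)$; cancelling its derivative gives $x^2-\lambda x-\tfrac{c}{2}=0$, whose unique positive root is $\tfrac12(\lambda+\sqrt{\lambda^2+2c})$. Because the objective is strictly convex with this interior stationary point, the constrained minimizer over $[\epsilon,+\infty)$ is $\max\!\big(\epsilon,\tfrac12(\lambda+\sqrt{\lambda^2+2c})\big)$, which, applied componentwise to $\boldsymbol{\sigma}_{\bC'_n}$, yields exactly the announced expression for $\boldsymbol{\sigma}_{\bC_n}$.

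The main obstacle is the rigorous application of the spectral-function reduction and, in particular, reconciling the eigendecomposition that the argument naturally calls for with the singular value decomposition appearing in the statement. One must check the hypotheses of the calculus rule (properness, lower semicontinuity and permutation invariance of $\gamma$, with the $\epsilon$-margin constraint absorbed into $\gamma$ through $\iota_{[\epsilon,+\infty)}$), and verify that the reconstructed matrix indeed lies in the domain of $g_0$, i.e. that every output singular value produced by $\prox_{c\phi}$ is at least $\epsilon$; this holds because $\tfrac12(\lambda+\sqrt{\lambda^2+2c})>0$ and the outer $\max$ with $\epsilon$ enforces the margin. The remaining manipulations are routine, so I would keep them brief and refer to the proof of Lemma~\ref{proxW} for the shared spectral-function machinery.
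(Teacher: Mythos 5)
Your proposal follows essentially the same route as the paper's proof: split the prox over the blocks $\bC_n$, recognize each block term as a spectral function so that the matrix prox reduces (via the same calculus rule used for Lemma~\ref{proxW}) to a scalar prox applied to the spectrum, and solve the resulting one-dimensional problem $x\mapsto\tfrac12(x-\lambda)^2-\tfrac{c}{2}\log x$ over $[\epsilon,+\infty)$ to obtain $\max\bigl(\epsilon,\tfrac12(\lambda+\sqrt{\lambda^2+2c})\bigr)$. Your treatment of the $\epsilon$-margin and of uniqueness is in fact slightly more explicit than the paper's, but the argument is the same.
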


\begin{proof}
    See Appendix \ref{proof proxC}.
\end{proof}

\subsection{Proposed \textbf{PALM-IVA-G} algorithm}

As shown in the previous subsection, the minimization of \eqref{Jiva reg} amounts to solve Problem \eqref{eq:palmpb}, that has a structure particularly well suited to block alternating minimization. More precisely, we have shown that the cost function includes (partially) Lipschitz differentiable terms acting on both $(\W,\C)$ variables (Lemmas \ref{lem:gradW} and \ref{lem:gradC}), as well as two terms acting separately on these variables. Despite being non differentiable, these terms are proper, lower-semicontinuous, and with a tractable proximity operator (Lemmas \ref{proxW} and \ref{proxC}). 
These results pave the way for applying a block alternating proximal gradient algorithm, as studied for instance in \cite{chouzenoux2016block,hien_inertial_nodate,chouzenoux2024variational}. Here, we opted for $\textbf{PALM}$ introduced in \cite{bolte2014proximal}, because of its powerful convergence results. We adapted here $\textbf{PALM}$ mechanism to the minimization of the cost \eqref{Jiva reg} and thus designed \textbf{PALM-IVA-G} presented in Alg.~\ref{PALM-IVA-G algorithm}. In $\textbf{PALM}$ initial study, the convergence was shown in the case of sequential block updates. Here, we instead opted for a more versatile update scheme that follows the so-called \emph{essentially cyclic rule} \cite{chouzenoux2016block}, allowing each block to be updated more than once, per main iteration. This assumption hence gives more flexibility to our algorithm, and it is straightforward to adapt the proof given in \cite{bolte2014proximal} to this case, using a similar technique to \cite{chouzenoux2016block}.

\begin{algorithm}[ht!]
\caption{PALM-IVA-G}
\label{PALM-IVA-G algorithm}
\begin{algorithmic}[1]
\raggedright
\Require Empirical covariance $\Rx$, initial tensors $(\W^{(0)}, \C^{(0)}) \in \mathcal{GL}_N(\R)^K \times (\epsilon \bI_K + \mathcal{S}_K^+)^N$,  penalty weight $\alpha>0$, stepsizes $\gamma_\C \in (0,2),\gamma_\W \in (0,1)$, maximal inner/outer loops $\overline{n}_\W \in \N \setminus \{0\}, \overline{n}_\C \in \N \setminus \{0\},\overline{N} \in \N \setminus \{0\}$, precision $\delta>0$.
\Ensure $(\W_{\text{out}},\C_{\text{out}})$
\LComment{Initialization}
\State $\W^{(0,0)} \gets \W^{(0)}$
\State $\C^{(0,0)} \hspace{0.15cm} \gets \C^{(0)}$
\State $c_{\C} \hspace{0.35cm} \gets \frac{\gamma_\C}{\alpha}$ 
\State $i \hspace{0.55cm} \gets 0$
\State $\theta_{\rm ext}^{(0)} \hspace{0.1cm} \gets +\infty$
\LComment{Start Main Loop}
\While{$\theta_{\rm ext}^{(i)} > \delta \textbf{ or } i < \overline{N}$}
\LComment{Update $\W$}
\State $c^{(i)}_{\W} \hspace{0.20cm} \gets \frac{\gamma_\W}{L_{\W}(\C^{(i)})}$ using \eqref{eq:lipW}
\State $j \hspace{0.50cm} \gets 0$
\State $\theta_{\rm int}^{(0)} \hspace{0.10cm} \gets +\infty$
\While{$\theta_{\rm int}^{(j)} > \delta \textbf{ or } j < \overline{n}_\W$}
\State \fontsize{8pt}{8pt} $\W^{(i,j+1)} \gets \prox_{c_\W^{(i)}f} (\W^{(i,j)} - c_\W^{(i)} \nabla_\W h(\W^{(i,j)},\C^{(i)}))$  \State \fontsize{8pt}{8pt} \qquad using \eqref{grad_W J} and \eqref{prox cf formula}
\State \fontsize{8pt}{8pt} $\theta_{\rm int}^{(j)}  \hspace{0.25cm}  \gets \theta_{\W}(\W^{(i,j+1)},\W^{(i,j)})$
\State \fontsize{8pt}{8pt} $j \hspace{0.60cm}  \gets j + 1$
\EndWhile
\State $\W^{(i+1,0)} \gets \W^{(i,j)}$
\State $\W^{(i+1)}  \hspace{0.22cm}  \gets \W^{(i+1,0)}$
\LComment{Update $\C$}
\State $j \hspace{0.50cm} \gets 0$
\State $\theta_{\rm int}^{(0)} \hspace{0.10cm} \gets +\infty$
\While{$\theta_{\rm int}^{(j)} > \delta \textbf{ or } j < \overline{n}_\C$}
\State \fontsize{8pt}{8pt} $\C^{(i,j+1)} \hspace{0.15cm} \gets \prox_{c_\C g} (\C^{(i,j)} - c_\C \nabla_\C h(\C^{(i,j)},\W^{(i+1)}))$
\State \fontsize{8pt}{8pt} $\qquad$ using \eqref{grad_C J} and \eqref{prox cg formula}
\State \fontsize{8pt}{8pt} $\theta_{\rm int}^{(j)}  \hspace{0.25cm}  \gets \theta_{\C}(\C^{(i,j+1)},\C^{(i,j)})$
\State \fontsize{8pt}{8pt} $j \hspace{0.6cm} \gets j+1$
\EndWhile
\State $\C^{(i+1,0)} \gets \C^{(i,j)}$
\State $\C^{(i+1)}  \hspace{0.22cm}  \gets \C^{(i+1,0)}$
\LComment{Evaluate Stopping Criteria}
\State $\theta_\W^{(i+1)} \hspace{0.13cm} \gets \theta_{\W}(\W^{(i+1)},\W^{(i)})$  using \eqref{eq:crit_thetaW}
\State $\theta_\C^{(i+1)} \hspace{0.13cm} \gets \theta_{\C}(\C^{(i+1)},\C^{(i)})$ using \eqref{eq:crit_thetaC}
\State $\theta_{\rm ext}^{(i+1)} \hspace{0.13cm} \gets \max(\theta_{\W}^{(i+1)},\theta_{\C}^{(i+1)})$
\State $i \hspace{0.88cm} \gets i+1$
\EndWhile
\State $(\W_{\text{out}},\C_{\text{out}}) \gets (\W^{(i)},\C^{(i)})$\\
\Return $(\W_{\text{out}},\C_{\text{out}})$
\end{algorithmic}
\end{algorithm}

At each step $i \in \mathbb{N}$ of \textbf{PALM-IVA-G} main loop, we update $\W$ (resp. $\C$) a number $n_\W(i) \leq \overline{n}_\W$ (resp. $n_\C(i) \leq \overline{n}_\C$) of times, with $\overline{n}_\W$ and $\overline{n}_\C$ positive integers. The updates include gradient, and proximal steps, as follows. First, gradient steps on $h$ with respect to the active block, $\W$ or $\C$, with positive stepsizes $c_{\W}^{(i)}$ or $c_{\C}$, respectively, are conducted. Then, proximal steps on $f$ or $g$, are run, using the same stepsizes. Inner and outer loops are controlled by a maximum number of iterations, and furthermore include early stopping tests, based on comparing the following quantities to the (small) precision parameter $\delta>0$: 
\begin{multline}
    \label{eq:crit_thetaW}
    (\forall (\W,\W') \in (\R^{N \times N \times K})^2 ) \\ 
    \theta_{\W}(\W,\W') = 
    \underset{\underset{1 \leq k \leq K}{1 \leq n \leq N}}{\max} \frac{||\bw_n^{'[k]} - \bw_n^{[k]}||^2}{2N},
\end{multline}
\begin{multline}
    \label{eq:crit_thetaC}
    (\forall (\C,\C') \in (\R^{K \times K \times N})^2) \\ 
    \theta_{\C}(\C,\C') = 
    \underset{\underset{1 \leq k \leq K}{1 \leq n \leq N}}{\max} \frac{||\bc'_{n,k} - \bc_{n,k}||^2}{2K},
\end{multline}

where, for $n\in \{1,\ldots,N\}$ and $k\in \{1,\ldots,K\}$, the notation $\bc_{n,k}$ refers to the $k$-th row of the matrix $\bC_n$.

Let us now move to Section \ref{sec:conv}, with the aim to establish the convergence of \textbf{PALM-IVA-G} iterates $(W^{(i)},\C^{(i)})_{i \in \mathbb{N}}$. Practical settings for \textbf{PALM-IVA-G} hyper-parameters will be discussed in Section~\ref{sec:expe}.

\section{Convergence result}
\label{sec:conv}

Let us state our convergence theorem for the proposed \textbf{PALM-IVA-G} algorithm.

\begin{theorem}[Convergence of \textbf{PALM-IVA-G}]
\label{thm:conv}
Assuming the setting of Algorithm \ref{PALM-IVA-G algorithm}, the infinite sequence $(\W^{(i)},\C^{(i)})_{i \in \mathbb{N}}$ converges to a critical point $(\W^*,\C^*)$ of function $J_{\rm IVA-G}^{\text{Reg}}$ given in \eqref{Jiva reg}.
    
\end{theorem}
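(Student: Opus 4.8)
The plan is to invoke the general convergence theory for PALM-type block-coordinate proximal gradient methods, as developed in \cite{bolte2014proximal} and extended to the essentially cyclic update rule in \cite{chouzenoux2016block}. This abstract machinery guarantees that, for a cost of the form $h + f + g$ with $h$ a $C^1$ function having block-Lipschitz partial gradients and $f,g$ proper lower-semicontinuous, the iterates of the scheme converge to a critical point \emph{provided} three structural hypotheses hold: (i) a block-wise descent property, ensured by taking stepsizes below the reciprocals of the partial Lipschitz moduli; (ii) coercivity (or at least sublevel-boundedness) of the objective, which forces the iterate sequence to be bounded; and (iii) that the objective function satisfies the Kurdyka--Łojasiewicz (KŁ) inequality. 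My proof of Theorem~\ref{thm:conv} would therefore proceed by verifying each of these three ingredients for our specific cost $J_{\rm IVA-G}^{\text{Reg}}$ and then citing the abstract result.

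First I would record that the stepsize conditions are met. Lemma~\ref{lem:gradW} gives the partial Lipschitz modulus $L_\W(\C) = \rho_\C \varrho_{\Rx}$ and Lemma~\ref{lem:gradC} gives $L_\C = \alpha$; the algorithm sets $c_\W^{(i)} = \gamma_\W / L_\W(\C^{(i)})$ with $\gamma_\W \in (0,1)$ and $c_\C = \gamma_\C / \alpha$ with $\gamma_\C \in (0,2)$. A subtlety here is that $L_\W(\C)$ depends on the current $\C$-block, so I must check that along the iterates $\rho_{\C^{(i)}}$ stays bounded, which follows once boundedness of the sequence is established; the block-descent lemma of \cite{bolte2014proximal} then yields a sufficient decrease of the cost at each inner update, and summing these guarantees the finite-length (hence convergent) behaviour once KŁ is in hand. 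Second, I would establish boundedness of $(\W^{(i)},\C^{(i)})_{i\in\mathbb{N}}$: the sufficient-decrease property confines the iterates to a sublevel set of $J_{\rm IVA-G}^{\text{Reg}}$, so it suffices to argue this sublevel set is bounded. The $\C$-block is controlled by the quadratic penalty $\tfrac{\alpha}{2}\sum_n \|\diag(\bC_n)-\boldsymbol{1}_K\|^2$ together with the indicator $\iota_{[\epsilon,+\infty)^K}(\boldsymbol\sigma_{\bC_n})$ (which lower-bounds the spectrum), and the $\W$-block is controlled through the term $\tfrac12\sum_n \tr(\bC_n \bW_n \Rx \bW_n^\top)$ combined with the $-\log|\det \bW^{[k]}|$ barrier, invoking the non-singularity of $\Rx$ flagged in the Remark. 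Establishing coercivity jointly in $(\W,\C)$ is where the scaling ambiguity bites, and this is precisely what the regularization was introduced to cure.

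The third and \textbf{hardest} ingredient is the KŁ property. Here I would argue that $J_{\rm IVA-G}^{\text{Reg}}$ is a \emph{semi-algebraic} (or, more generally, definable in an o-minimal structure) function, since every such function automatically satisfies the KŁ inequality at each point of its domain \cite{bolte2014proximal}. The term $h$ is polynomial in the entries of $\W$ and $\C$, hence semi-algebraic; the penalty $\|\diag(\bC_n)-\boldsymbol{1}_K\|^2$ is polynomial; the indicator $\iota_{[\epsilon,+\infty)^K}(\boldsymbol\sigma_{\bC_n})$ has a semi-algebraic domain (a spectral constraint, semi-algebraic because eigenvalues of symmetric matrices are semi-algebraic functions of the entries); the only non-algebraic pieces are the logarithmic barriers $-\log|\det\bW^{[k]}|$ and $-\tfrac12\log\det\bC_n$, which are definable in the o-minimal structure $\mathbb{R}_{\exp}$ generated by the exponential. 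Since $\log\det$ is real-analytic on the open cone of admissible matrices and the whole function is definable, the KŁ inequality holds. The main obstacle I anticipate is the careful bookkeeping for the essentially cyclic, multiple-inner-iteration update rule: I must verify that allowing $n_\W(i)\le\overline n_\W$ and $n_\C(i)\le\overline n_\C$ repetitions per outer iteration still preserves the sufficient-decrease and relative-error conditions that the abstract convergence proof requires. As the text notes, this adaptation follows \cite{chouzenoux2016block}, so I would cite that technique rather than reprove it, and then conclude that the sequence has finite length and converges to a single critical point $(\W^*,\C^*)$ of $J_{\rm IVA-G}^{\text{Reg}}$.
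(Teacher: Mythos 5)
Your proposal is correct and follows essentially the same route as the paper: reduce Theorem~\ref{thm:conv} to the abstract convergence theorem of \cite{bolte2014proximal} (adapted to the essentially cyclic rule as in \cite{chouzenoux2016block}) and verify its hypotheses — lower-boundedness, block-Lipschitz gradients with uniformly bounded moduli, boundedness of the iterates, and the KL property via definability in an o-minimal structure containing $\log$ and the semi-algebraic functions. The one sub-step where you genuinely diverge is the boundedness of $(\C^{(i)})_{i\in\N}$: you argue via coercivity of the sublevel sets (quadratic diagonal penalty plus the PSD constraint controlling the off-diagonal entries, beating the logarithmic $-\tfrac12\log\det\bC_n$ term), whereas the paper instead derives a recursive contraction directly from the prox-gradient update, of the form $\|\bC_n^{(i,j+1)}\|_{\rm S}\leq(1-\tfrac{\gamma_\C}{K})\|\bC_n^{(i,j)}\|_{\rm S}+\gamma_\C+\sqrt{c_\C/2}$, yielding the explicit bound $\bar{\rho}$ that is then reused to set $\lambda_\W^+=\bar{\rho}\,\varrho_{\Rx}$. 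Both arguments are valid; yours is more generic (it would survive changes to the update rule), while the paper's gives a computable constant and sidesteps any appearance of circularity between sufficient decrease and the uniform Lipschitz bounds. For the $\W$-block your sketch matches the paper's: $\bC_n\succeq\epsilon\bI_K$ and $\Rx\succ 0$ give a quadratic lower bound $\tfrac{\epsilon\sigma^-}{2}\|\W\|^2$ on the trace term, against which the $-\log|\det\bW^{[k]}|$ barrier only loses logarithmically (Hadamard). To make the write-up complete you would still need to carry out these estimates explicitly, but no idea is missing.
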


Here, a critical point is defined as in \cite[Rem.1 (iv)]{bolte2014proximal}, i.e., $0 \in \partial J_{\rm IVA-G}^{\text{Reg}}(\W^*,\C^*)$, where $\partial$ denotes the limiting subdifferential operator. The proof of the above result relies on \cite[Theorem 1]{bolte2014proximal} in the case of a cyclic update of the blocks. The latter states the convergence of a generic form of $\textbf{PALM}$ method under several assumptions regarding the properties of the cost function, and provided the sequence is bounded. It is hence sufficient to show that such conditions hold in our case, to prove Theorem \ref{thm:conv}. In the previous section, we have already seen that $f$ and $g$ are proper and lower semi-continuous functions such that the proximal operators $\prox_{cf}$ and $\prox_{cg}$ are defined for all $c>0$, at any point $\W \in \mathcal{GL}_N(\R)^K$ and $\C \in (\mathcal{S}_K^{++})^N$,  respectively. We also outlined that $h$ is a $C^1$ function and for every given $\W' \in \R^{N \times N \times K}$ (resp. $\C' \in \R^{K \times K \times N}$), the function $\C \mapsto h(\W',\C)$ (resp. $\W \mapsto h(\W,\C')$) is $C^{1,1}_{L_{\C}(\W')}$, i.e. its partial gradient $\C \mapsto \nabla_{\C} h(\W',\C)$ is globally Lipschitz with modulus $L_{\C}(\W')$ (resp. $C^{1,1}_{L_{\W}(\C')}$). We note that those first properties were necessary to well-defining the algorithm itself.

Using our notation, to complete the proof, we need to demonstrate each item of the following Assumption \ref{as:Palm} is satisfied.

\begin{assumption}\ 
\label{as:Palm}
\begin{enumerate}
    \item $\inf_{\R^{N \times N \times K} \times \R^{K \times K \times N}} J_{\rm IVA-G}^{\text{Reg}} > - \infty$.
    \item $\nabla h$ is Lipschitz continuous on bounded subsets of $\R^{N \times N \times K} \times \R^{K \times K \times N}$.
    \item $(\W^{(i)},\C^{(i)})_{i \in \N}$ is bounded.
    \item There exists $(\lambda_{\W}^+, \lambda_{\W}^-, \lambda_{\C}^+, \lambda_{\C}^-) > 0$ such that: $$(\forall i \in \N) \, \lambda_{\W}^- \leq L_{\W}(\C^{(i)}) \leq \lambda_{\W}^+,$$ and $$(\forall i \in \N)\, \lambda_{\C}^- \leq L_{\C}(\W^{(i)}) \leq \lambda_{\C}^+.$$
    \item $J_{\rm IVA-G}^{\text{Reg}}$ is a KL function.
\end{enumerate}
\end{assumption}

Indeed, following the steps of \cite{bolte2014proximal}, the update scheme of the algorithms makes the sequence of the costs decreasing, and using the first item of the above assumption, this sequence has a finite limit. Then, items 2) to 4) enable to prove that the limit set of $(\W^{(i)},\C^{(i)})_{i \in \N}$ is nonempty, compact, and contains only critical points of $\Jivareg$. Finally, the last item is used to prove that $(\W^{(i)},\C^{(i)})_{i \in \N}$ is a Cauchy sequence, hence convergent.
The verification of Assumption~\ref{as:Palm} is technically involved and is therefore provided in Appendix \ref{covergence proof}.

\section{Experimental results}
\label{sec:expe}
We now present a set of experiments, to assess the quantitative, qualitative, and computational performance of $\textbf{PALM-IVA-G}$ on tasks of independent Gaussian sources separation.

\subsection{Experimental protocol}

\subsubsection{Qualitative evaluation}

In all the experiments, the proposed method $\textbf{PALM-IVA-G}$, as well as the benchmarks, are evaluated by means of the so-called 
jISI (joint Intersymbol Interference) score, also used in~\cite{Anderson2012}. This score is an extension of the ISI score that was introduced in \cite{amari_new_1995} to assess the performance of ICA methods. jISI score measures the correspondance between demixing matrices $\W = [\bW^{[1]},\ldots,\bW^{[K]}]$ and the ground truth mixing matrices $\mathcal{A} = [\bA^{[1]},\ldots,\bA^{[K]}]$, up to a common permutation and a rescaling of their rows. Let us note
\begin{equation}
    (\forall k \in \kfirst) \quad \mathbf{G}^{[k]} = \bW^{[k]}\bA^{[k]}
\end{equation}
and, for every $(m,n) \in \nfirst^2$, the mean $\overline{g}_{n,m} = \sum_{k=1}^K |g_{n,m}^{[k]}|$ of the $(n,m)$-th entry of tensor $\mathcal{G} = [\mathbf{G}^{[1]},\ldots,\mathbf{G}^{[K]}]$. The jISI score of the pair $(\W,\mathcal{A})$ is
\begin{align}
&\text{jISI}(\W,\mathcal{A})\nonumber = \frac{1}{2N(N-1)} \Bigg[ \sum_{n=1}^N \Bigg( \sum_{m=1}^N \frac{\overline{g}_{n,m}}{\max_p \overline{g}_{n,p}} - 1 \Bigg) \nonumber\\
&+ \sum_{m=1}^N \Bigg( \sum_{n=1}^N \frac{\overline{g}_{n,m}}{\max_p \overline{g}_{p,m}} - 1 \Bigg) \Bigg]. 
\label{eq:jISI}
\end{align}

As defined in \eqref{eq:jISI}, the jISI score is a real number between $0$ and $1$, with jISI score equals $0$ if and only if $\overline{\mathbf{G}} = (\overline{g}_{n,m})_{1\leq n, m \leq N}$ has exactly one positive coefficient by row and by column, that is when it is a (possibly permuted) diagonal matrix. This happens when the $(\bW^{[k]}\bA^{[k]})_{1 \leq k \leq K}$ are simultaneously permuted diagonal matrices, which is the best situation one can expect from a source separation step. Hence, the smallest the jISI score, the better quality for the source separation.

\subsubsection{Benchmark methods}
Our algorithm $\textbf{PALM-IVA-G}$ is compared against two state of the art algorithms for independant Gaussian source separation, namely \textbf{IVA-G-V} and \textbf{IVA-G-N}, both introduced in \cite{Anderson2012}. Those two algorithms perform the minimization of the cost function $\widetilde{J}_{\rm IVA-G}$ \eqref{Anderson cost} (which, as we remind, only depends on variable $\W$) using, respectively, a gradient descent and a Newton's method. Both implement a backtracking linesearch, and a normalization of the rows of the demixing matrix at each iteration. Hyper-parameters are set as recommended in \cite{Anderson2012}. Let us emphasize that both of these algorithms are empirical, and, to our knowledge, do not benefit from established convergence guarantees, though in practice we did not observe any failure. 

For every experiment, and each algorithm, we obtain an estimate for which we record the jISI score reached at convergence (i.e., when stopping criterion is reached), and the computational time, in seconds, denoted $\texttt{T}$, to reach this convergence point. All algorithms are implemented in Python 3.10.12 and run on a Dell Precision 5820 Workstation {with 11th Gen} Intel(R) Core(TM) i9-10900X at 3.00GHz, equipped with 32Go Ram. 

\subsubsection{Synthetic dataset generation}
For each experimental setup, we define a generative model defined by the ground-truth variables $(\mathcal{A},\mathbf{\Sigma})$, and use this model to generate source data $\S$ of length $V = 10000$, and then mix it into observed data $\X$. The goal is to recover estimates of $([(\bA^{[1]})^{-1},\ldots,(\bA^{[K]})^{-1}],[\mathbf{\Sigma}_1^{-1},\ldots,\mathbf{\Sigma}_N^{-1}])$ up to the scaling and permutation ambiguities. All trials are repeated over $100$ independent runs, and we compute the mean $\mu_{\rm jISI}$ (resp. $\mu_{\texttt{T}}$) and standard-deviation $\sigma_{\rm jISI}$ (resp. $\sigma_{\texttt{T}}$) values for the jISI score (resp. computational time $\texttt{T}$). Most experiments are conducted for various values for the dimensions $(K,N)$, specified in each test case. Depending on the nature of the phenomenon modeled, the covariance and the mixing matrices may have various properties, leading us to define several sets of experiments, detailed hereafter.

We aim at exploring the impact of the overall level of correlation across the datasets (given by the extra-diagonal coefficients of the SCVs covariance matrices), and the variability of those correlations. Our generative model used to simulate SCVs depends on two parameters $\boldsymbol{\rho} = [\rho_1,\ldots,\rho_N]^\top \in [0,1]^N$ and $\lambda \in [0,1]$. Given those, we compute a ground truth tensor $\mathbf{\Sigma} = \left [ \mathbf{\Sigma}_1,\ldots,\mathbf{\Sigma}_N\right ]$ with, for every $n$,
\begin{equation}
    \mathbf{\Sigma}_n = \rho_n \mathbf{1} \mathbf{1}^\top + \frac{\lambda}{R} \bQ \bQ^\top + \eta_n \bI_K
\end{equation}
with
\begin{equation}
    \eta_n = 1 - \rho_n - \lambda \in [0,1],
\end{equation}
matrix $\bQ$ randomly drawn in $\R^{K \times R}$ with elements $q_{i,j} \sim \mathcal{N}(0,1)$ mutually independent, and $R \in \N \setminus \{0\}$ a predefined rank value. 




Then, we have, for every $n$, and every entry $(i,j)$,
\begin{equation}
    \begin{cases}
        \mathbb{E}\{(\mathbf{\Sigma}_n)_{i,j}\} = \rho_n + (\lambda + \eta_n) \delta_{i,j}\\
        \operatorname{Var}\{(\mathbf{\Sigma}_n)_{i,j}\} = \frac{\lambda^2}{R}(1+\delta_{i,j}).
    \end{cases}
\end{equation}
This means that $\boldsymbol{\rho}$ controls the average correlation across the datasets while $\lambda$ controls the variability between the correlations. The third term ensures that the covariance matrices we use are positive definite. In our experiments, we opt for four scenarios, corresponding to low/high variability, and low/high correlation, as defined below:
\begin{itemize}
    \item Case A: low correlation, low variability. $\lambda = 0.04$ and, $\boldsymbol{\rho}$ regularly sampled in $[0.2,0.3]^N$,
    \item Case B: low correlation, high variability. $\lambda = 0.25$ and, $\boldsymbol{\rho}$ regularly sampled in $[0.2,0.3]^N$,
    \item Case C: high correlation, low variability.
    $\lambda = 0.04$ and, $\boldsymbol{\rho}$ regularly sampled in $[0.6,0.7]^N$,
    \item Case D: high correlation, high variability.
    $\lambda = 0.25$ and, $\boldsymbol{\rho}$ regularly sampled in $[0.6,0.7]^N$,
\end{itemize}
The sources are expected to be easier to separate in case D, while case A is more challenging, since increasing the correlation or the variability of the sources decreases the Cramer-Rao Lower-Bound of the ML estimator and thus tends to improve the separation~\cite{Anderson_IVA_IC_PB}. Each of the four cases is investigated, for various sizes $K \in \{5,10,20\}$ and $N \in \{10,20\}$, and we set $R = K+10$. In all experiments, the ground truth tensor $\mathcal{A}$ is simulated by drawing its entries independently from a zero-mean Gaussian distribution with a standard deviation of $1$. 







For each simulated pair $(\mathcal{A},\mathbf{\Sigma})$, we build the corresponding input data $\mathcal{X}$, following the mixing model \eqref{matricial mixing equations}. Note that, as recommended in \cite{Anderson2012}, we systematically performed a whitening of the data, before applying the algorithms. This step amounts to multiplying, for every $k \in \kfirst$, the latent sources by full-rank matrices $(\sbmm{B}^{[k]})_{1 \leq k \leq K}$, and solving the demixing problem on $\bX'^{[k]} = \sbmm{B}^{[k]}\bX^{[k]} = \sbmm{B}^{[k]}\bA^{[k]}\bS^{[k]}$. The whitening matrices are set to decrease the spectral norm of $\Rx$, with the aim to improve the conditioning of the loss function, and hence accelerate the empirical convergence of the methods. 


\subsubsection{Algorithmic settings}
The implementation of the proposed 
\textbf{PALM-IVA-G} algorithm requires the setting of (i) the stepsizes, (ii) the regularization weight, and (iii) the stopping conditions for internal and external loops. 

We set the stepsizes to constant values $(\gamma_\C,\gamma_\W) = (1.99,0.99)$, i.e. as large as possible to meet the convergence theorem assumptions. The penalty parameter $\alpha$ appears to have little influence over the performance of \textbf{PALM-IVA-G}, in terms of jISI metric and computational time, as long as it is chosen in a reasonable range. This can be observed empirically in Fig. \ref{fig:compare alphas} summarizing results for Case A and various values of $\alpha$. Indeed, except for $\alpha = 10$ which seems to yield systematically slower computations, we cannot observe that a value of $\alpha$ gives consistently better results than the others. Similar behaviors were obtained for Cases B to D. For the experiments, we thus retain $\alpha = 1$, as it achieves a good compromise in terms of time complexity.

\begin{figure}[t]
    \centering
    \begin{tabular}{@{}c@{}c@{}c@{}}
\includegraphics[width=4.5cm]{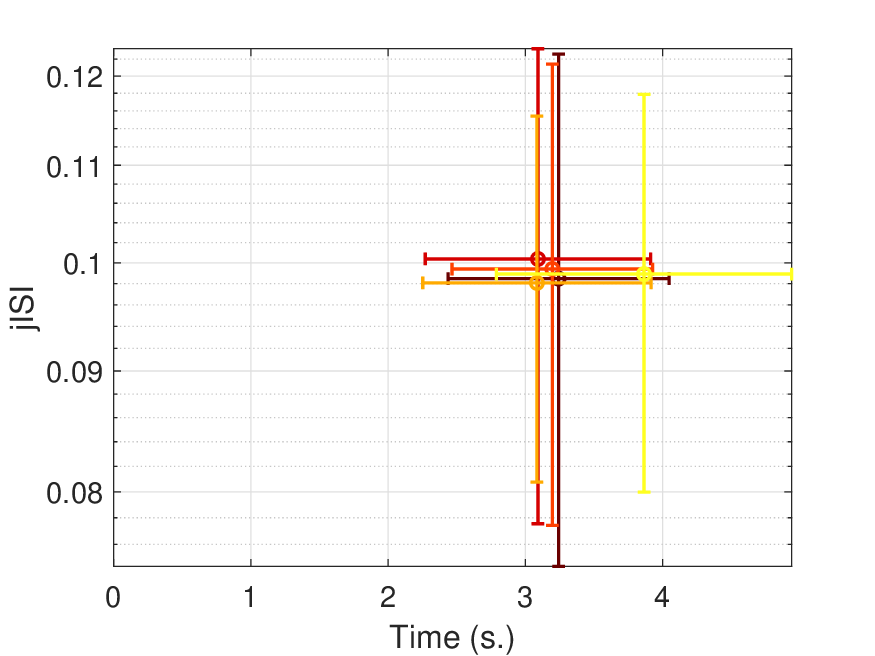}&
\includegraphics[width=4.5cm]{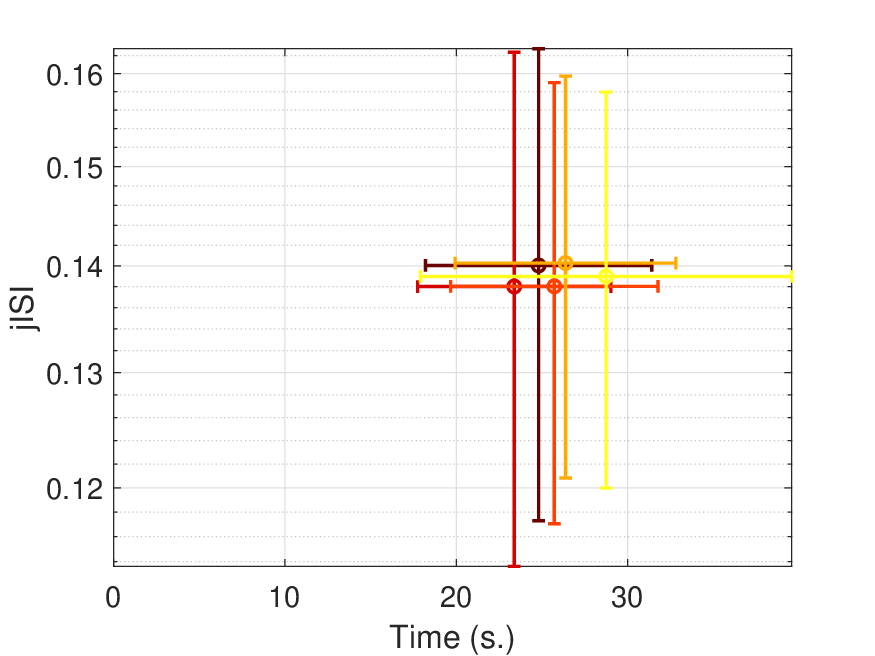}\\
$(K,N) = (5,10)$ & $(K,N) = (5,20)$\\
\includegraphics[width=4.5cm]{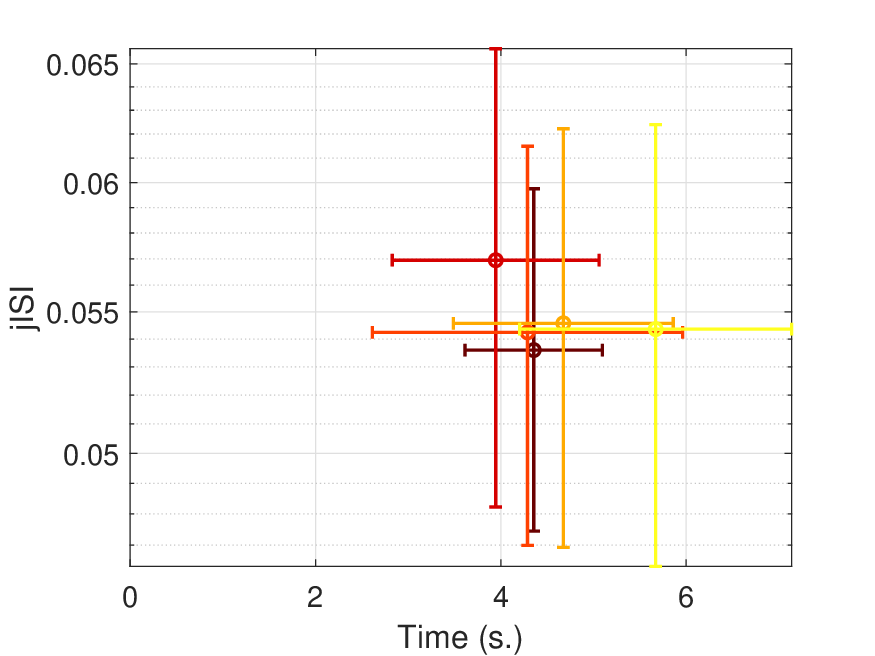}&
\includegraphics[width=4.5cm]{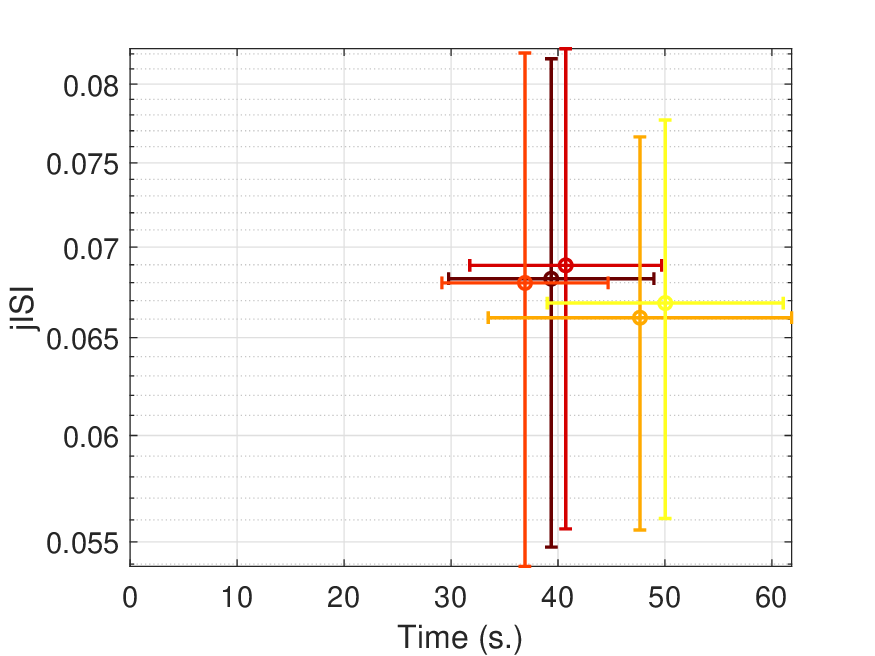}\\
$(K,N) = (10,10)$ & $(K,N) = (10,20)$\\
\includegraphics[width=4.5cm]{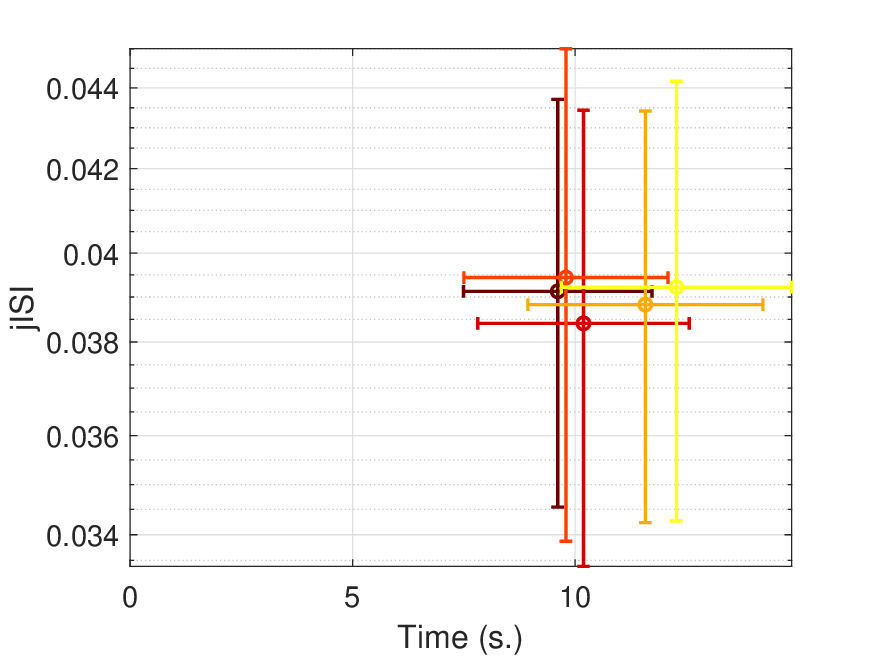}&
\includegraphics[width=4.5cm]{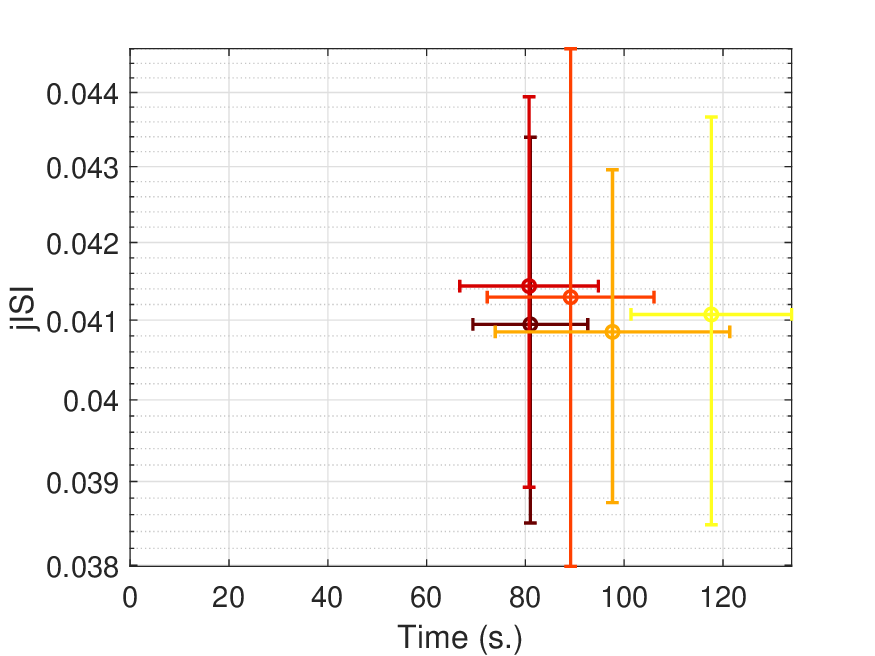}\\
$(K,N) = (20,10)$ & $(K,N) = (20,20)$\\
    \end{tabular}
    \caption{jISI scores vs computation times (mean $\pm$ standard deviation) using \textbf{PALM-IVA-G} for Case $A$ over 20 runs, with $\alpha$ taking values in $\{0.1,0.5,1,5,10\}$ (the brighter color, the higher $\alpha$). The jISI score varies little with $\alpha$, the latter impacting mostly the time, with good compromise at $\alpha = 1$ (dark orange).}
    \label{fig:compare alphas}
\end{figure}

\textbf{PALM-IVA-G} algorithm, as well as its competitors \textbf{IVA-G-V} and \textbf{IVA-G-N} are run until a certain stopping criterion is reached, with a maximum number of $\overline{N} = 20000$ iterations. The precision threshold is set to $\delta = 10^{-10}$, and the maximum number of iterations within the internal loops of \textbf{PALM-IVA-G} are $\bar{n}_\C = 1$ and $\bar{n}_\W = 15$. For \textbf{IVA-G-V} (resp. \textbf{IVA-G-N}), we monitor only the value of \eqref{eq:crit_thetaW} between consecutive iterates, stopping once lower than $\delta = 10^{-6}$ (resp. $\delta = 10^{-7}$). Note that the values of $(\bar{n}_\C,\bar{n}_\W,\delta)$ have been empirically set to reach the best trade-off between jISI score and computational time, ensuring fair comparisons of the methods.

The validity of our settings can be assessed visually on Fig. \ref{fig:empirchart}, showing the cost function evolution using our \textbf{PALM-IVA-G} algorithm in a representative example. 

\begin{figure}[H]
    \centering
    \begin{tabular}{@{}c@{}c@{}}
\includegraphics[width=4.2cm]{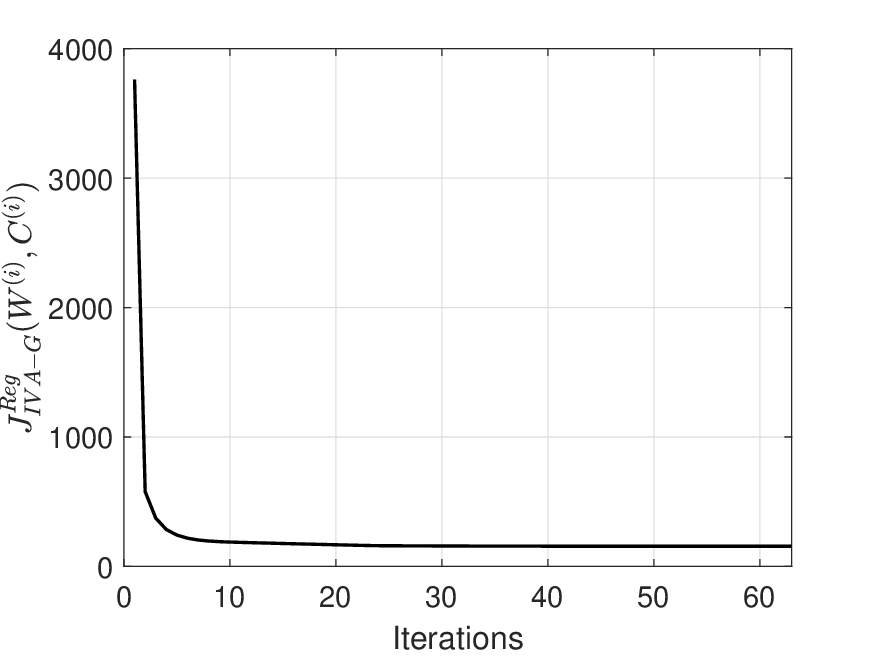}&
\includegraphics[width=4.2cm]{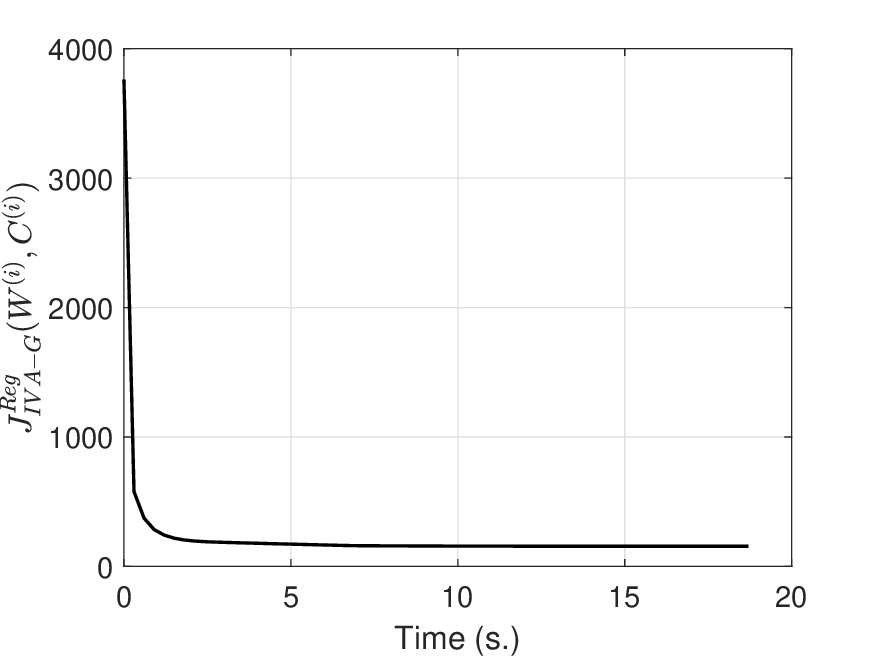} \\
\includegraphics[width=4.2cm]{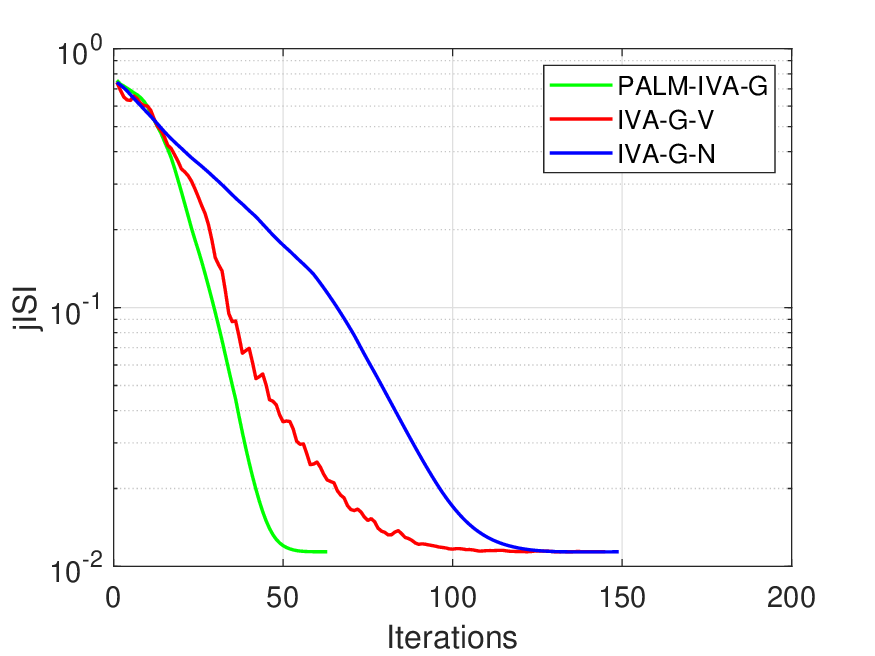}&
\includegraphics[width=4.2cm]{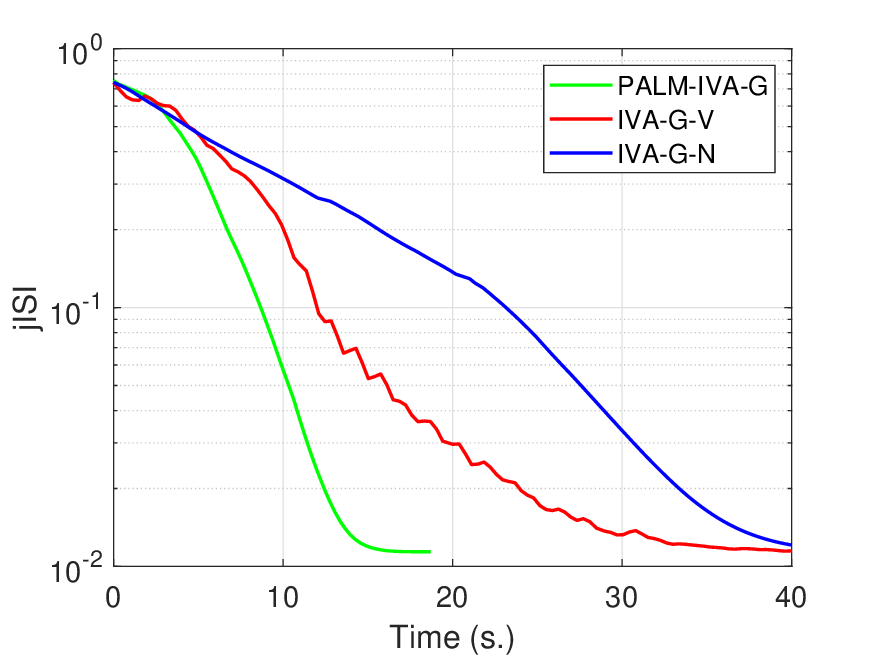}
    \end{tabular}
    \caption{Top: Empirical convergence of \textbf{PALM-IVA-G}, on a synthetic example from Case D, with $(K,N)=(20,20)$, $\lambda = 0.25$ and $\boldsymbol{\rho} \in[0.2,0.3]^\top$. Cost function across iterations (left) and time in second (right). Bottom: On the same example, evolution of jISI score along iterations (left) and time in seconds (right), for the compared methods.}
    \label{fig:empirchart}
\end{figure}

\subsection{Results}

We now present the results of the experiments, and comment on the strengths and weaknesses of our algorithm in comparison with the benchmarks. The results are illustrated in Figs. ~\ref{fig:caseAtoB} and ~\ref{fig:caseCtoD}, and summarized in  Tab. ~\ref{tab:caseAtoD}. On Fig.~\ref{fig:caseAtoB} and ~\ref{fig:caseCtoD}, we display, for each of the three algorithms, a cross centered at $(\mu_\texttt{T},\mu_{\rm jISI})$, and spread out by $\pm \sigma_\texttt{T}$ (horizontal) and $\pm \sigma_{\rm jISI}$ (vertical) axis. The best results thus correspond to a cross located on the bottom left side of the figure (i.e., low jISI score reached in a minimal time). In Tab.~\ref{tab:caseAtoD}, we highlight in bold font (resp. italic bold font) the best results in terms of $\mu_{\rm jISI}$ (resp. $\mu_\texttt{T}$), considering of similar quality jISI scores with less than $10^{-4}$ difference (resp. computation times with less that $10^{-2}$ difference).

All algorithms achieve what appears to be optimal separation in easy cases (B and D). In difficult cases, it is generally either $\textbf{IVA-G-V}$ or $\textbf{PALM-IVA-G}$ that have the best jISI, with an advantage for $\textbf{PALM-IVA-G}$ in small dimensions and an advantage for $\textbf{IVA-G-V}$ in larger dimensions, but in all cases performance remains close. $\textbf{IVA-G-N}$ is generally not as good, but manages to keep its performance close to that of the other algorithms, except for Case A, where it gives jISI values 20 to 50 percent higher than the other algorithms on average.

Computations of $\textbf{PALM-IVA-G}$ are tractable, and stay under two minutes. The running time is sensitive to the number of sources, and seems to grow linearly with the number of datasets within the tested dimensions. For $N = 10$, our algorithm takes less than $15$ seconds to run in average in all cases. Meanwhile, $\textbf{IVA-G-V}$ is less sensitive to $N$ and also manages to separate the sources in two minutes in average at most for the dimensions in the experiment. However, $\textbf{IVA-G-N}$ is the slowest of all three algorithms, it takes several tens of seconds in small dimensions for Cases A and C, and its computational cost becomes prohibitive in larger dimensions, taking an average time of fifteen minutes to run in Case A for $K = 20$ and $N = 20$. Visually, it leads to blue crosses often positioned on the right side of the plots in Figs. ~\ref{fig:caseAtoB} and ~\ref{fig:caseCtoD}.

In contrast to gradient descent, iterations using Newton's method or proximal gradient are more informed and are expected to find the local minimum more efficiently. On the other hand, these methods are more costly, since they involve Hessian inversions for $\textbf{IVA-G-N}$, and SVDs for our $\textbf{PALM-IVA-G}$, and such cost is not necessarily offset by a gain in number of iterations. Besides, the update scheme of $\textbf{PALM-IVA-G}$ implies many more updates of the block $\W$ than the block $\C$, so most of the matrices for which we compute the SVD are of size $N \times N$ rather than $K \times K$, this is why the computation time of $\textbf{PALM-IVA-G}$ increases faster with respect to $N$ than $K$.



\begin{figure}[ht!]
    \centering
    \scalebox{0.95}{
    \begin{tabular}{@{}c@{}c@{}c@{}}
\includegraphics[width=3cm]{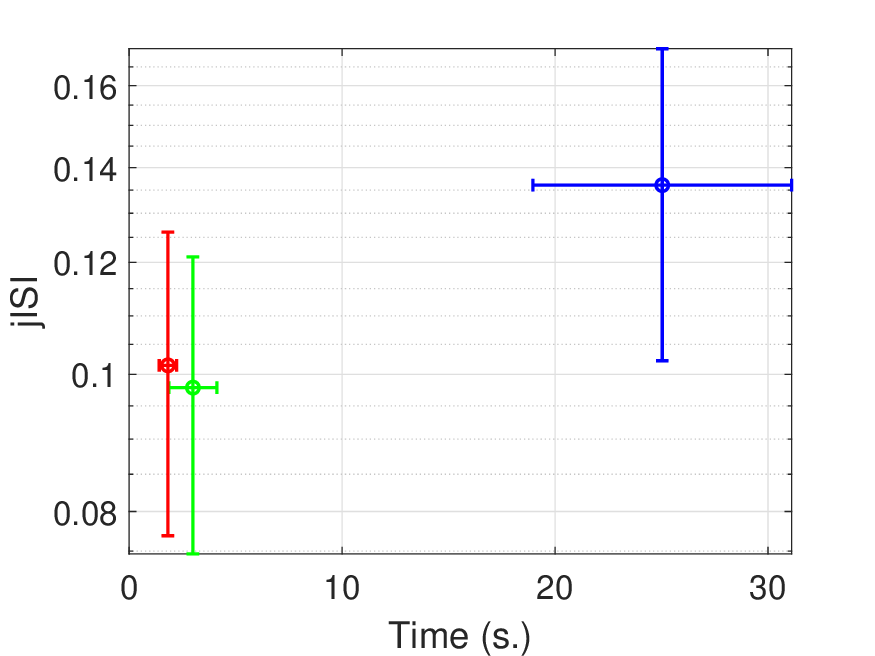}&
\includegraphics[width=3cm]{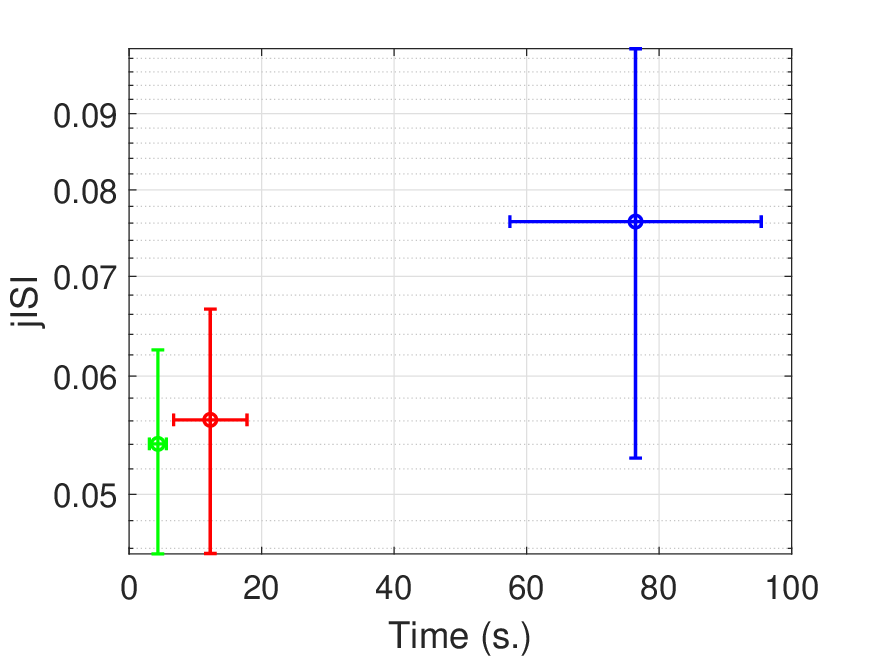}&
\includegraphics[width=3cm]{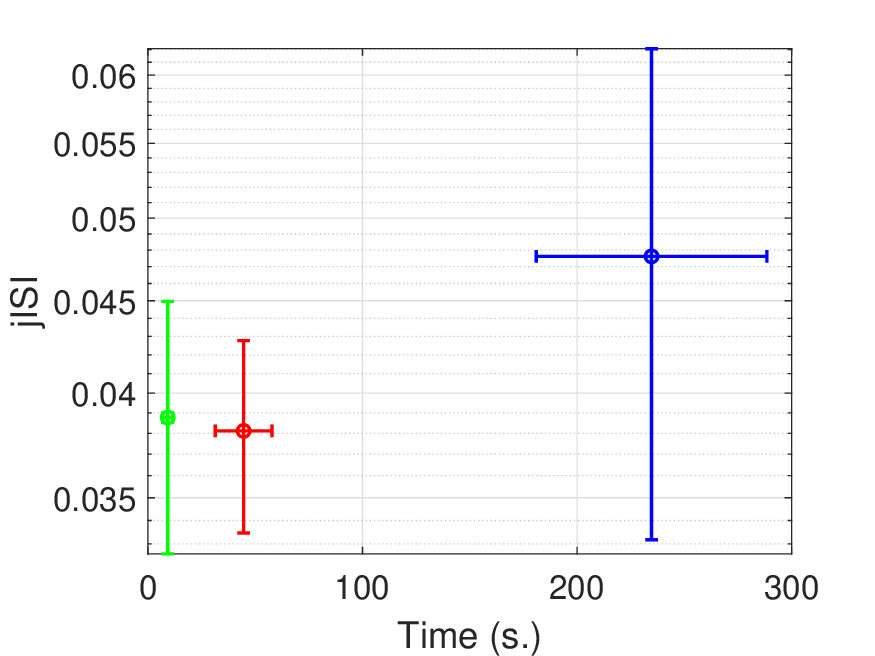}\\
$(K,N) = (5,10)$ & $(K,N) = (10,10)$ & $(K,N) = (20,10)$\\
\includegraphics[width=3cm]{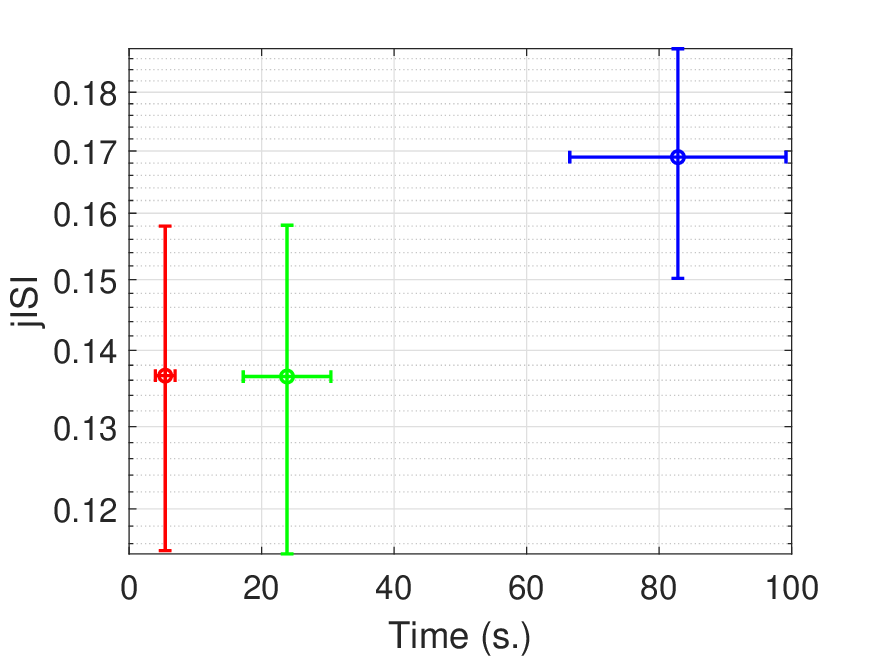}&
\includegraphics[width=3cm]{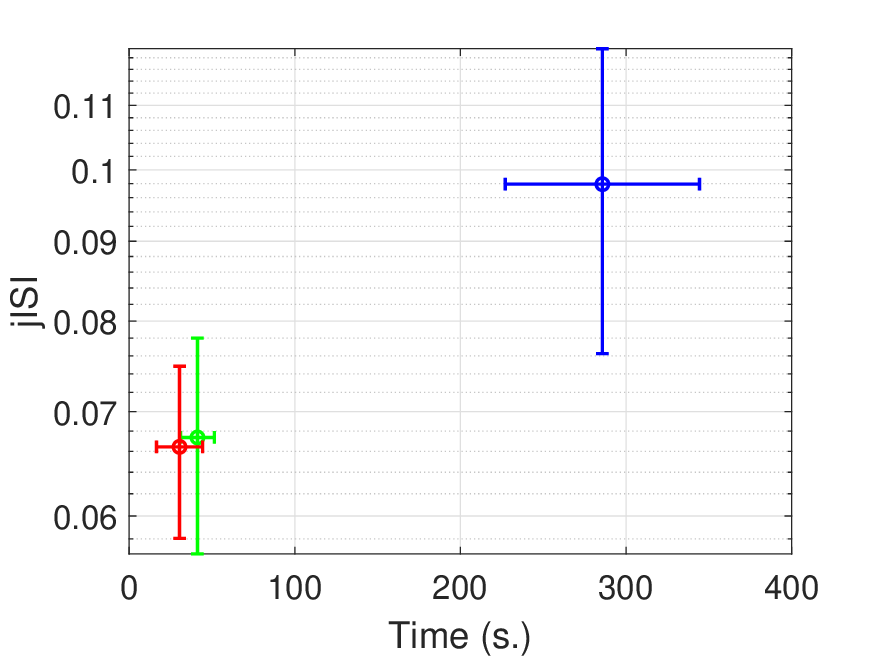}&
\includegraphics[width=3cm]{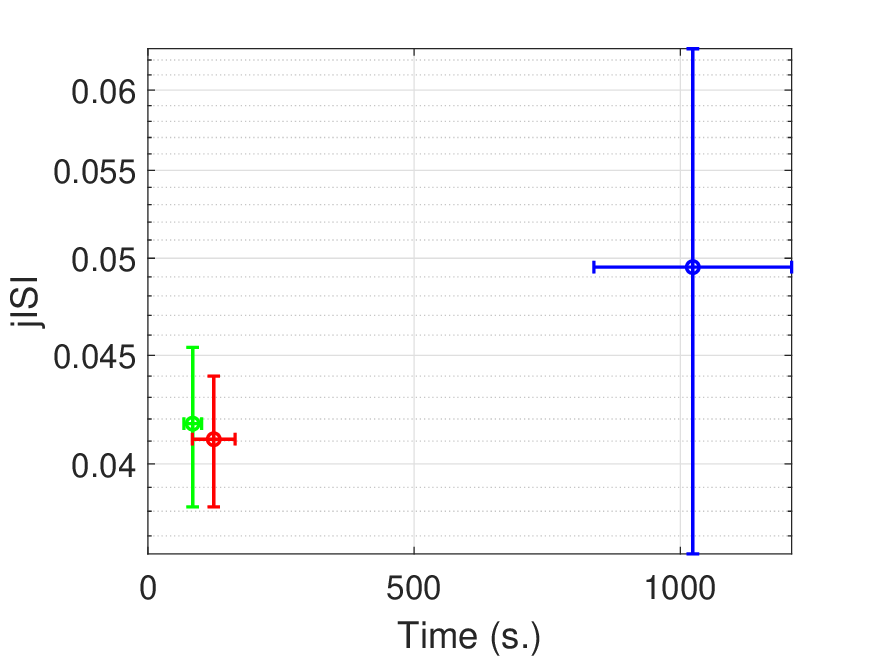}\\
$(K,N) = (5,20)$ & $(K,N) = (10,20)$ & $(K,N) = (20,20)$\\
\multicolumn{3}{c}{\textbf{Case A}:  $\boldsymbol{\rho} \in [0.2,0.3]^N$ and $\lambda = 0.04$} \\
\includegraphics[width=3cm]{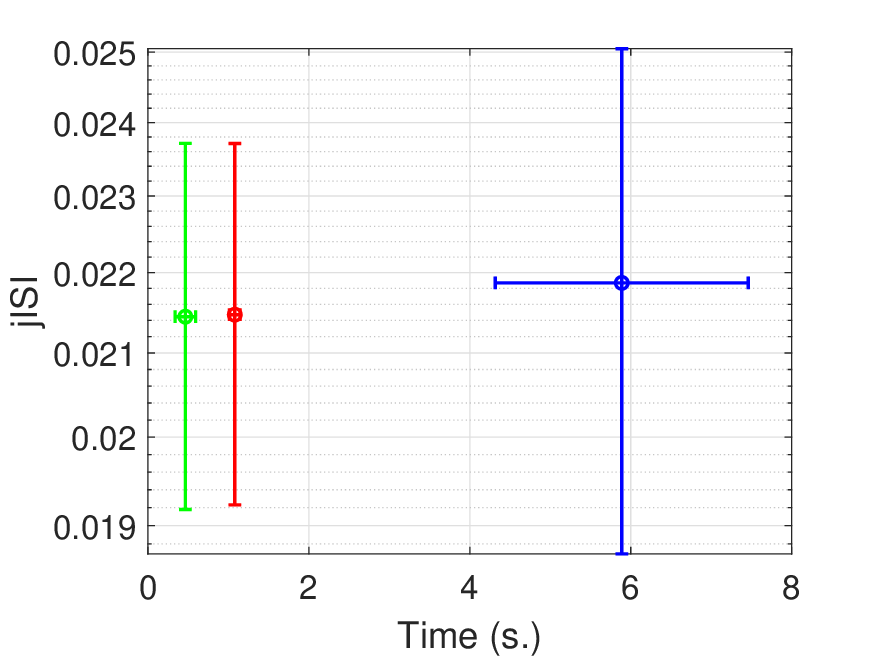}&
\includegraphics[width=3cm]{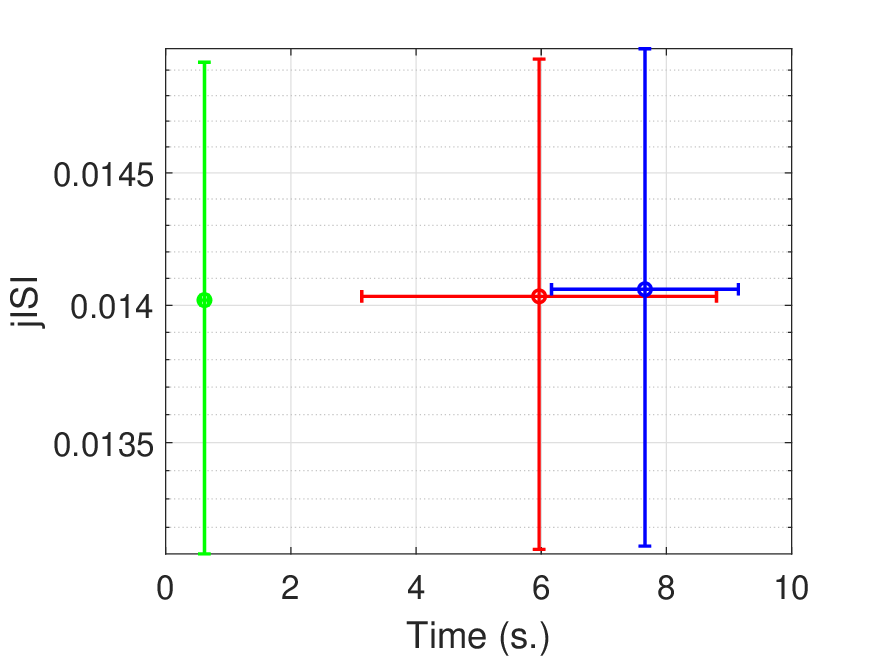}&
\includegraphics[width=3cm]{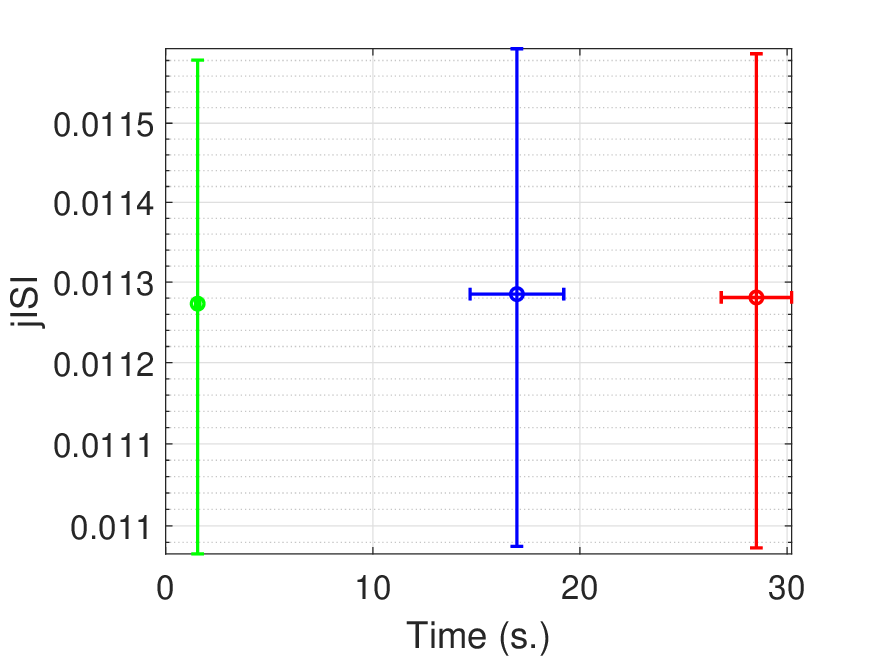}\\
$(K,N) = (5,10)$ & $(K,N) = (10,10)$ & $(K,N) = (20,10)$\\
\includegraphics[width=3cm]{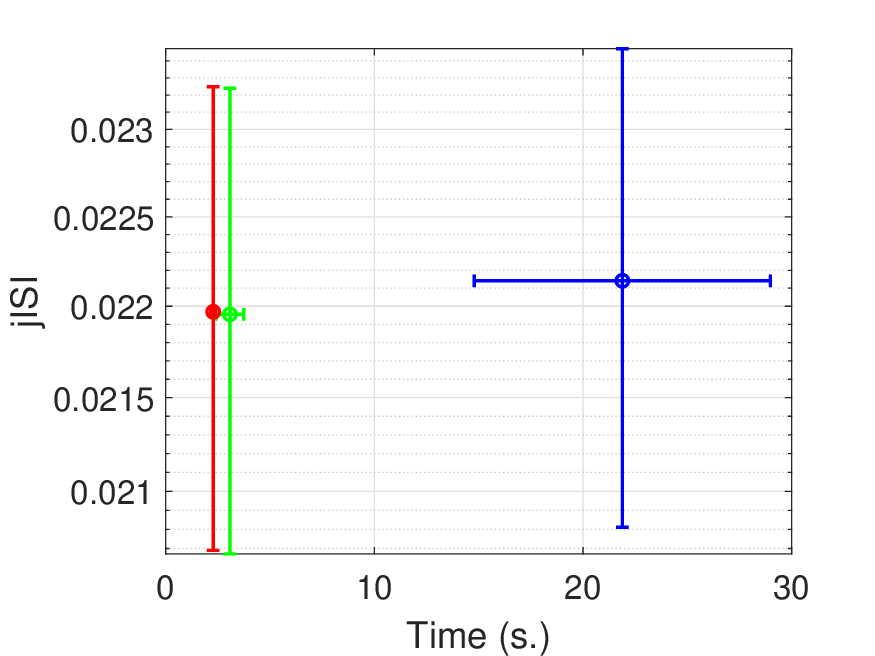}&
\includegraphics[width=3cm]{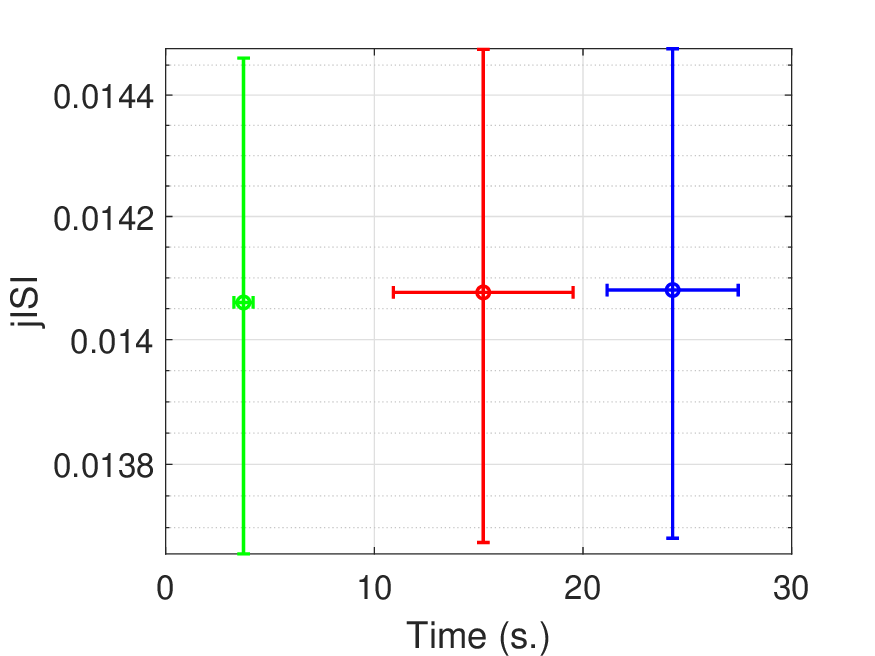}&
\includegraphics[width=3cm]{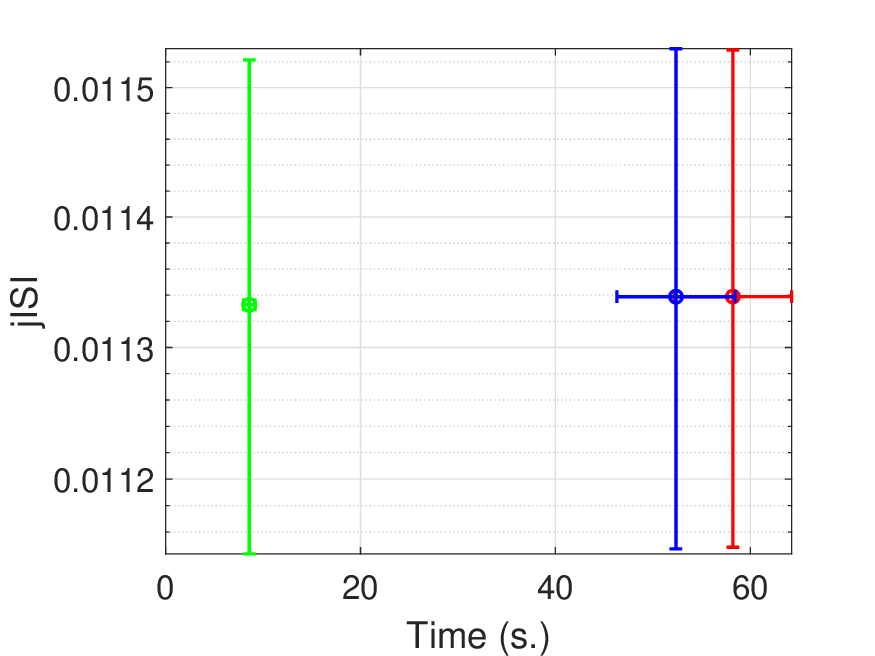}\\
$(K,N) = (5,20)$ & $(K,N) = (10,20)$ & $(K,N) = (20,20)$\\
\multicolumn{3}{c}{\textbf{Case B}: $\boldsymbol{\rho} \in [0.2,0.3]^N$ and $\lambda = 0.25$} \\
    \end{tabular}
    }
    \caption{jISI score vs computational time in seconds (mean $\pm$ standard deviation), for \textbf{PALM-IVA-G} (green), \textbf{IVA-G-V} (red) and \textbf{IVA-G-N} (blue), for Case A and Case B.}
    \label{fig:caseAtoB}
\end{figure}

\begin{figure}[ht!]
    \centering
    \scalebox{0.95}{
    \begin{tabular}{@{}c@{}c@{}c@{}}
\includegraphics[width=3cm]{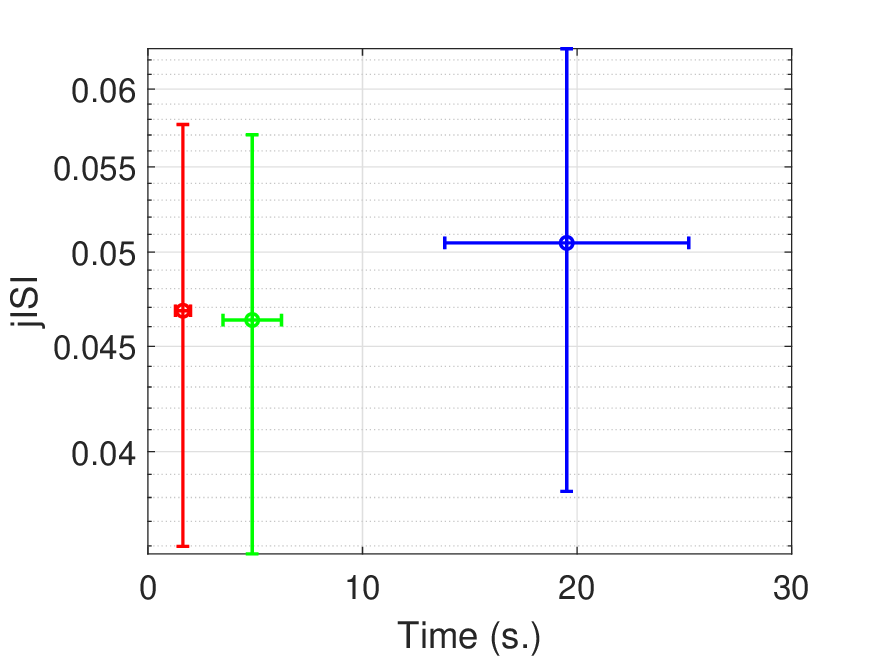}&
\includegraphics[width=3cm]{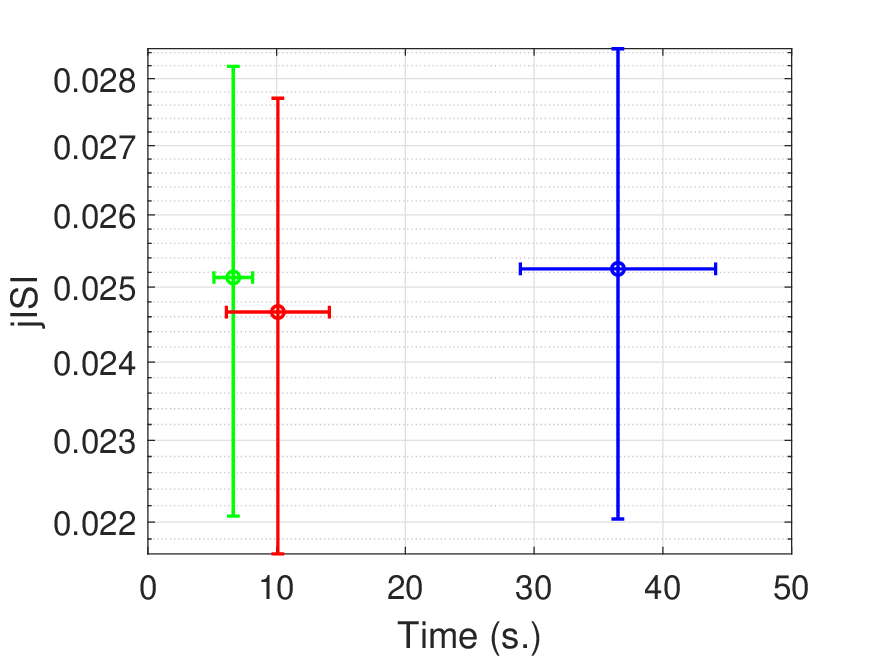}&
\includegraphics[width=3cm]{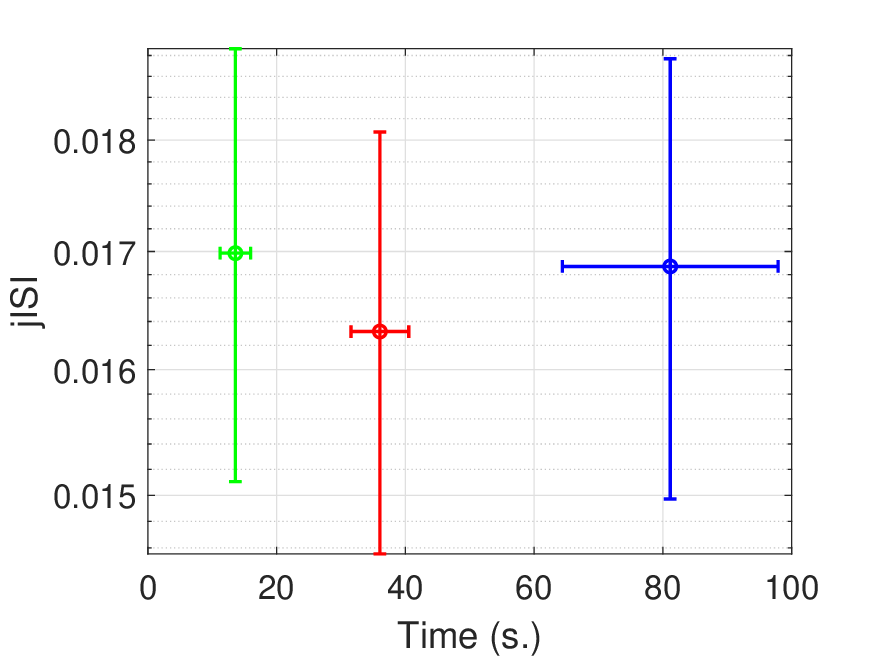}\\
$(K,N) = (5,10)$ & $(K,N) = (10,10)$ & $(K,N) = (20,10)$\\
\includegraphics[width=3cm]{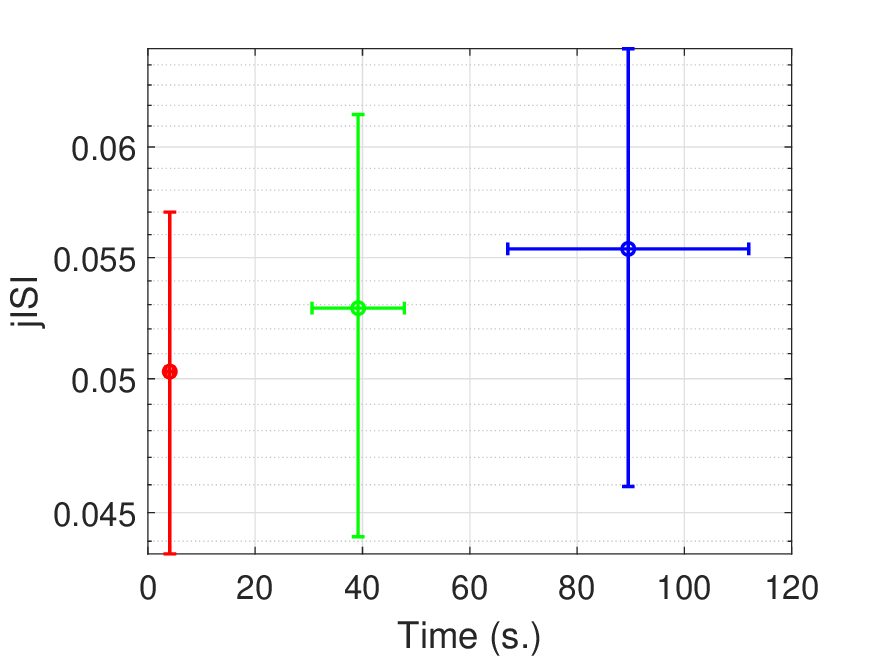}&
\includegraphics[width=3cm]{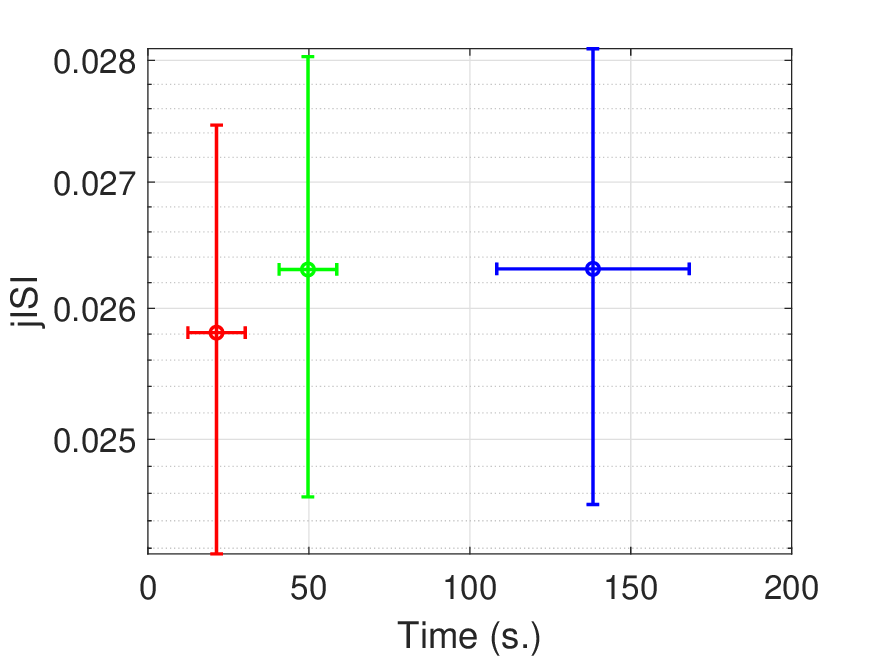}&
\includegraphics[width=3cm]{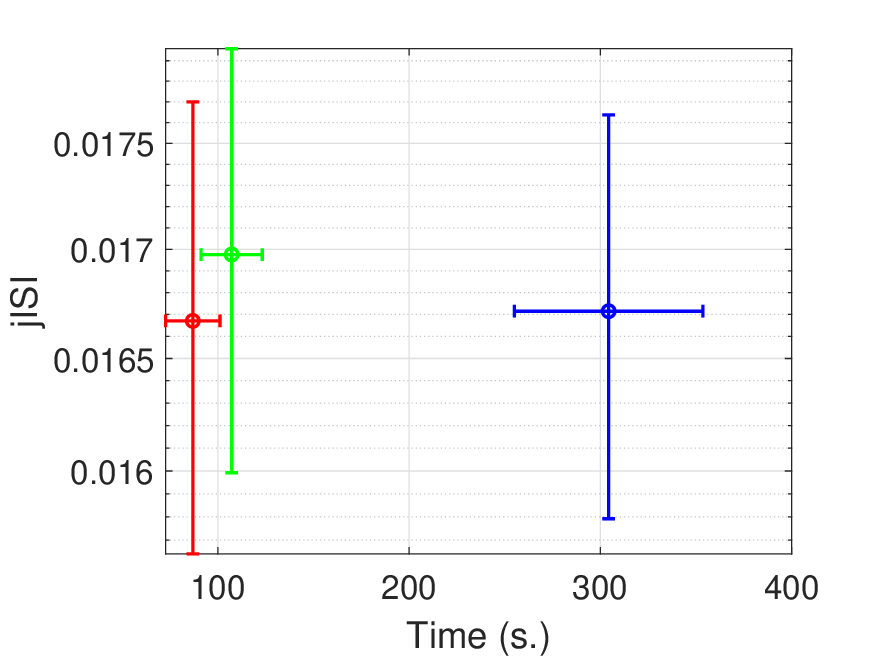}\\
$(K,N) = (5,20)$ & $(K,N) = (10,20)$ & $(K,N) = (20,20)$\\
\multicolumn{3}{c}{\textbf{Case C}:$\boldsymbol{\rho} \in [0.6,0.7]^N$ and $\lambda = 0.04$} \\
\includegraphics[width=3cm]{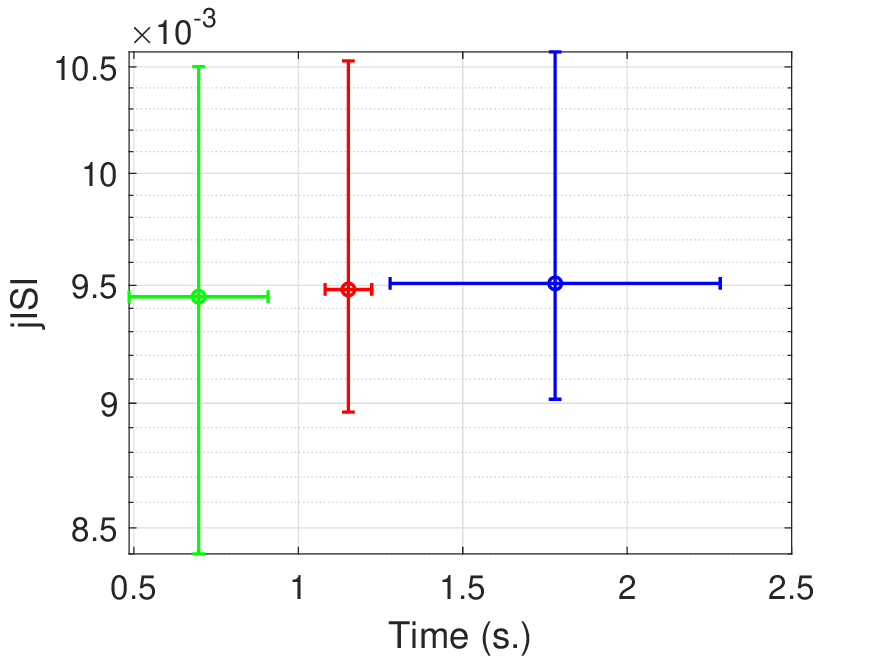}&
\includegraphics[width=3cm]{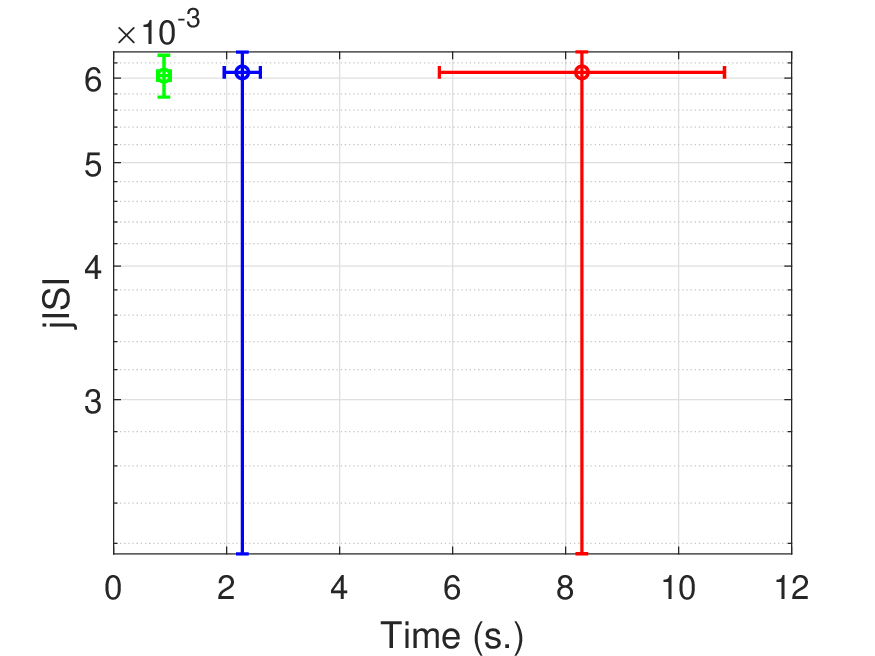}&
\includegraphics[width=3cm]{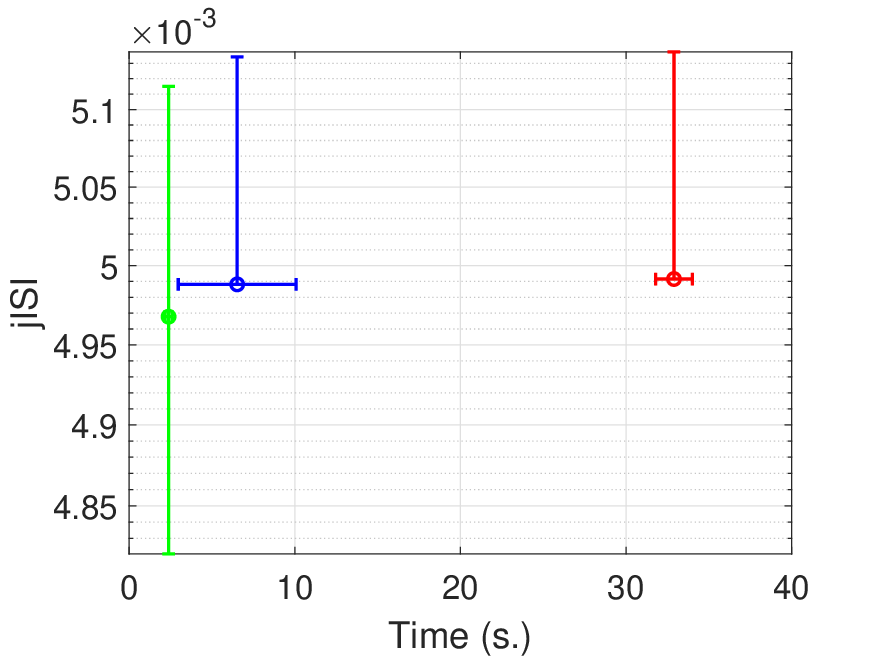}\\
$(K,N) = (5,10)$ & $(K,N) = (10,10)$ & $(K,N) = (20,10)$\\
\includegraphics[width=3cm]{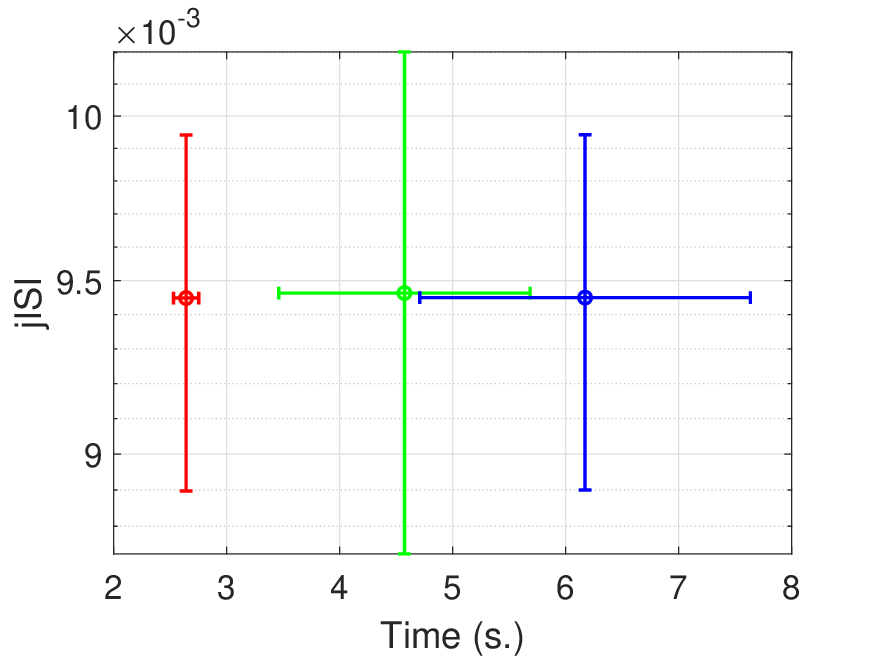}&
\includegraphics[width=3cm]{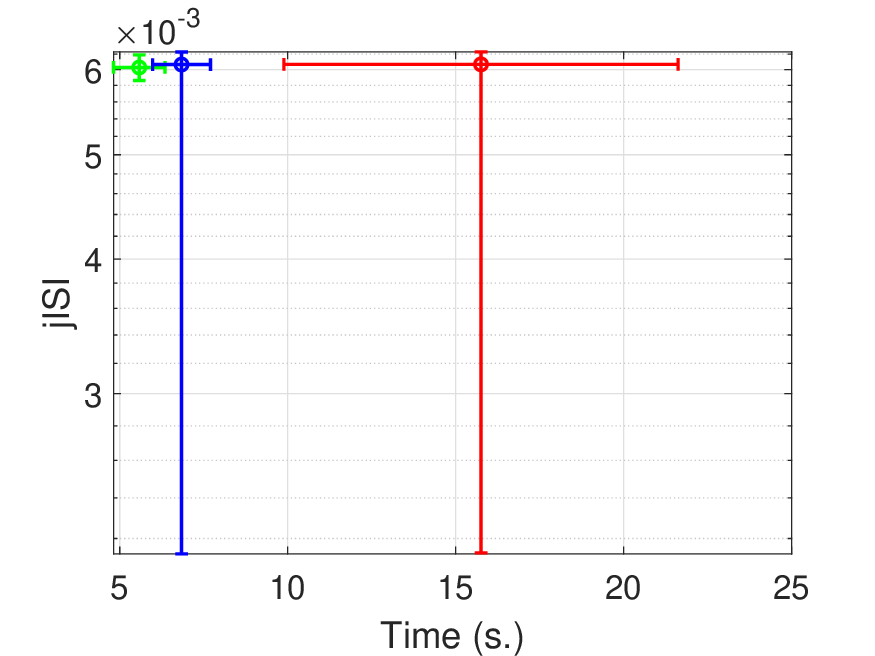}&
\includegraphics[width=3cm]{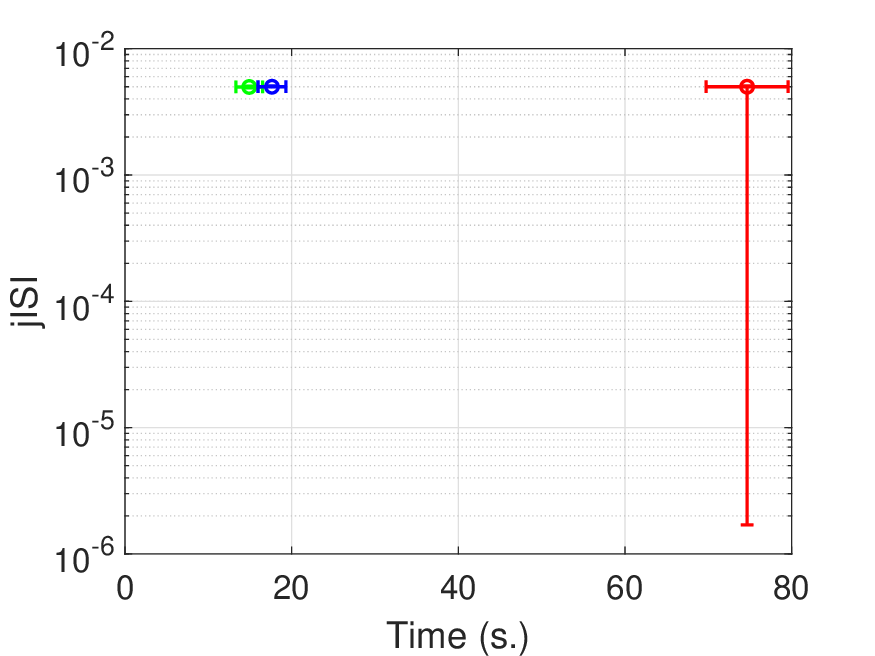}\\
$(K,N) = (5,20)$ & $(K,N) = (10,20)$ & $(K,N) = (20,20)$\\
\multicolumn{3}{c}{\textbf{Case D}:$\boldsymbol{\rho} \in [0.6,0.7]^N$ and $\lambda = 0.25$} 
    \end{tabular}
    }
    \caption{jISI score vs computational time in seconds (mean $\pm$ standard deviation), for \textbf{PALM-IVA-G} (green), \textbf{IVA-G-V} (red) and \textbf{IVA-G-N} (blue), for Cases C and D.}
    \label{fig:caseCtoD}
\end{figure}

In conclusion, in addition to its established convergence guarantees, \textbf{PALM-IVA-G} appears to be competitive with the state of the art IVA-G algorithm, consistently achieving good jISI scores and taking reasonable time to run, especially when the number of sources is not too high.

\begin{table}[ht!]
\caption{Averaged jISI scores, $\mu_{\rm jISI}$, and averaged computational times, $\mu_\texttt{T}$, in seconds, for \textbf{PALM-IVA-G}, \textbf{IVA-G-V} and \textbf{IVA-G-N} (from top to bottom). Best (i.e., lowest) jISI results (resp. lowest times) are highlighted in bold (resp. italic bold).}
\vspace{0.4cm}
\fontsize{5pt}{5pt}\selectfont
\begin{tabular}{cm{0.5cm}m{0.5cm}cccccc}
& & & \multicolumn{2}{c}{$K$ = 5} & \multicolumn{2}{c}{$K$ = 10} & \multicolumn{2}{c}{$K$ = 20}\\
 \cmidrule(lr){4-5} \cmidrule(lr){6-7} \cmidrule(lr){8-9}
& & & $N$ = 10 & $N$ = 20 & $N$ = 10 & $N$ = 20 & $N$ = 10 & $N$ = 20\\
\midrule
\multirow{12}{*}{\rotatebox[origin=c]{90}{\small{\textbf{PALM-IVA-G}}}}& \multirow{2}{*}{\begin{tabular}{c} Case A \end{tabular}}& $\mu_{\rm jISI}$ & \textbf{9.79E-02} & \textbf{1.36E-01} & \textbf{5.40E-02} & 6.74E-02 & 3.88E-02 & 4.18E-02\\
& & $\mu_\texttt{T}$ & 3.0 & 23.8 & \textit{\textbf{4.3}} & 41.2 & \textit{\textbf{9.2}} & \textit{\textbf{84.2}}\\
\cmidrule(lr){2-9}& \multirow{2}{*}{\begin{tabular}{c} Case B \end{tabular}}& $\mu_{\rm jISI}$ & \textbf{2.14E-02} & \textbf{2.20E-02} & \textbf{1.40E-02} & \textbf{1.41E-02} & \textbf{1.13E-02} & \textbf{1.13E-02}\\
& & $\mu_\texttt{T}$ & \textit{\textbf{0.5}} & 3.1 & \textit{\textbf{0.6}} & \textit{\textbf{3.7}} & \textit{\textbf{1.5}} & \textit{\textbf{8.6}}\\
\cmidrule(lr){2-9}& \multirow{2}{*}{\begin{tabular}{c} Case C \end{tabular}}& $\mu_{\rm jISI}$ & \textbf{4.63E-02} & 5.29E-02 & 2.51E-02 & 2.63E-02 & 1.70E-02 & 1.70E-02\\
& & $\mu_\texttt{T}$ & 4.9 & 39.2 & \textit{\textbf{6.6}} & 49.7 & \textit{\textbf{13.6}} & 107.2\\
\cmidrule(lr){2-9}& \multirow{2}{*}{\begin{tabular}{c} Case D \end{tabular}}& $\mu_{\rm jISI}$ & \textbf{9.45E-03} & \textbf{9.46E-03} & \textbf{6.03E-03} & \textbf{6.03E-03} & \textbf{4.97E-03} & \textbf{4.98E-03}\\
& & $\mu_\texttt{T}$ & \textit{\textbf{0.7}} & 4.6 & \textit{\textbf{0.9}} & \textit{\textbf{5.6}} & \textit{\textbf{2.4}} & \textit{\textbf{14.9}}\\
\bottomrule
\\
\midrule
\multirow{12}{*}{\rotatebox[origin=c]{90}{\small{\textbf{IVA-G-V}}}}& \multirow{2}{*}{\begin{tabular}{c} Case A \end{tabular}}& $\mu_{\rm jISI}$ & 1.01E-01 & 1.37E-01 & 5.61E-02 & \textbf{6.64E-02} & \textbf{3.81E-02} & \textbf{4.11E-02}\\
& & $\mu_T$ & \textit{\textbf{1.8}} & \textit{\textbf{5.4}} & 12.2 & \textit{\textbf{30.4}} & 44.6 & 123.7\\
\cmidrule(lr){2-9}& \multirow{2}{*}{\begin{tabular}{c} Case B \end{tabular}}& $\mu_{\rm jISI}$ & \textbf{2.15E-02} & \textbf{2.20E-02} & \textbf{1.40E-02} & \textbf{1.41E-02} & \textbf{1.13E-02} & \textbf{1.13E-02}\\
& & $\mu_\texttt{T}$ & 1.1 & \textit{\textbf{2.3}} & 6.0 & 15.2 & 28.5 & 58.2\\
\cmidrule(lr){2-9}& \multirow{2}{*}{\begin{tabular}{c} Case C \end{tabular}}& $\mu_{\rm jISI}$ & 4.68E-02 & \textbf{5.03E-02} & \textbf{2.47E-02} & \textbf{2.58E-02} & \textbf{1.63E-02} & \textbf{1.67E-02}\\
& & $\mu_\texttt{T}$ & \textit{\textbf{1.6}} & \textit{\textbf{4.1}} & 10.1 & \textit{\textbf{21.3}} & 36.0 & \textit{\textbf{86.8}}\\
\cmidrule(lr){2-9}& \multirow{2}{*}{\begin{tabular}{c} Case D \end{tabular}}& $\mu_{\rm jISI}$ & \textbf{9.48E-03} & \textbf{9.45E-03} & \textbf{6.08E-03} & \textbf{6.07E-03} & \textbf{4.99E-03} & \textbf{5.00E-03}\\
& & $\mu_\texttt{T}$ & 1.2 & \textit{\textbf{2.6}} & 8.3 & 15.8 & 32.9 & 74.7\\
\bottomrule
\\
\midrule
\multirow{12}{*}{\rotatebox[origin=c]{90}{\small{\textbf{IVA-G-N}}}}& \multirow{2}{*}{\begin{tabular}{c} Case A \end{tabular}}& $\mu_{\rm jISI}$ & 1.36E-01 & 1.69E-01 & 7.62E-02 & 9.79E-02 & 4.76E-02 & 4.95E-02\\
& & $\mu_\texttt{T}$ & 25.0 & 82.8 & 76.4 & 285.7 & 234.7 & 1023.5\\
\cmidrule(lr){2-9}& \multirow{2}{*}{\begin{tabular}{c} Case B \end{tabular}}& $\mu_{\rm jISI}$ & 2.19E-02 & 2.21E-02 & \textbf{1.41E-02} & \textbf{1.41E-02} & \textbf{1.13E-02} & \textbf{1.13E-02}\\
& & $\mu_\texttt{T}$ & 5.9 & 21.9 & 7.7 & 24.3 & 17.0 & 52.4\\
\cmidrule(lr){2-9}& \multirow{2}{*}{\begin{tabular}{c} Case C \end{tabular}}& $\mu_{\rm jISI}$ & 5.05E-02 & 5.54E-02 & 2.52E-02 & 2.63E-02 & 1.69E-02 & \textbf{1.67E-02}\\
& & $\mu_\texttt{T}$ & 19.5 & 89.5 & 36.5 & 138.3 & 81.1 & 304.3\\
\cmidrule(lr){2-9}& \multirow{2}{*}{\begin{tabular}{c} Case D \end{tabular}}& $\mu_{\rm jISI}$ & \textbf{9.51E-03} & \textbf{9.45E-03} & \textbf{6.08E-03} & \textbf{6.06E-03} & \textbf{4.99E-03} & \textbf{5.00E-03}\\
& & $\mu_\texttt{T}$ & 1.8 & 6.2 & 2.3 & 6.8 & 6.5 & 17.6\\
\bottomrule
\\
\end{tabular}
\label{tab:caseAtoD}
\end{table}

\section{Conclusion}
\label{sec:conc}

In this article, we addressed the problem of joint blind source separation, through the IVA-G formulation. First, we derived a cost function parameterised by the demixing matrices and the precision matrices of the sources, to solve IVA-G based on the maximum likelihood estimator. We then introduced an additional term to fix the scaling ambiguity, hence forcing the precision matrices of output solution to have ones on its diagonal, completing the design of our cost function.

Then, we studied the different terms of this non-convex and non-smooth cost, in particular, we gave explicit formulas for the partial gradients of the smooth term with respect to both blocks of variable and for the proximity operators of the separable terms. Based on these results, we proposed an algorithm adapted from the $\textbf{PALM}$ optimizer, and proved that the sequence of iterates converges globally to a critical point. We compared our method to two state-of-the-art IVA-G algorithms and showed that our method is competitive in terms of jISI score and computation time, especially for a moderate number of sources.

As a future work, those encouraging results on synthetic datasets should be confirmed on real data, for instance from fMRI or MEEG. Depending on the application considered, we could leverage model knowledge in the form of new regularization terms in the cost function. The proposed proximal alternating algorithm is versatile and would be easily adaptable to a large class of penalties and constraints.

\begin{appendices}

\section{Proof of Lemma \ref{lem:gradW}}
\label{proof gradW}
Let $(\W,\C) \in \R^{N \times N \times K} \times \R^{K \times K \times N}$. The partial derivative of $h$ in \eqref{eq:hpalm}, with respect to the $(n,m,k)$-th entry of tensor $\W$ reads
\begin{equation}
\frac{\partial h(\W,\C)}{\partial w_{n,m}^{[k]}} = \frac{1}{2}\sum_{n'=1}^N \frac{\partial \tr (\bC_{n'}\bW_{n'}\Rx \bW_{n'}^\top)}{\partial w_{n,m}^{[k]}}.
\end{equation}

For $n' \neq n$, the terms of the above sum are null, hence
\begin{equation}
\frac{\partial h(\W,\C)}{\partial w_{n,m}^{[k]}} = \frac{1}{2} \frac{\partial \tr (\bC_n\bW_n\Rx \bW_n^\top)}{\partial w_{n,m}^{[k]}}.
\end{equation}
Then, by applying the formula of the derivative of a matricial scalar product \cite{Petersen2008}, 

\begin{align}
    & \frac{\partial h(\W,\C)}{\partial w_{n,m}^{[k]}} = \frac{1}{2} \frac{\partial \langle \bC_n \bW_n\Rx|\bW_n \rangle}{\partial w_{n,m}^{[k]}} \nonumber\\
    &\; = \frac{1}{2} \Big( \langle \frac{\partial (\bC_n \bW_n\Rx)}{\partial w_{n,m}^{[k]}}|\bW_n\rangle + \langle \bC_n \bW_n\Rx|\frac{\partial \bW_n}{\partial w_{n,m}^{[k]}} \rangle \Big) \nonumber\\
    &\; = \frac{1}{2} \Big( \langle \bC_n\frac{\partial \bW_n}{\partial w_{n,m}^{[k]}}\Rx|\bW_n\rangle + \langle \bC_n \bW_n\Rx|\frac{\partial \bW_n}{\partial w_{n,m}^{[k]}} \rangle \Big) \nonumber\\
    &\; = \tr \Big(\frac{\partial \bW_n}{\partial w_{n,m}^{[k]}}\Rx \bW_n^\top \bC_n\Big).
\end{align}
Hereabove, $\frac{\partial \bW_n}{\partial w_{n,m}^{[k]}}$ is a matrix of dimension $K\times KN$ whose elements are equal to $0$, except one, at row index $k$ and column index $(k-1)N+m$, which is equal to $1$. We deduce
\begin{equation}
        \frac{\partial h(\W,\C)}{\partial w_{n,m}^{[k]}}
        = [\bC_n\bW_n\Rx]_{k,(k-1)N+m}.
\end{equation}
The above expression can be reexpressed in a more concise matrix form which gives \eqref{grad_W J}, and proves the first part of the 
lemma.  

Let $\C \in \R^{K \times K \times N}$. From \eqref{grad_W J}, we can see that $\nabla_{\W} h(.,\C)$ is linear, and thus Lipschitz continuous. Moreover, for every $\W \in \R^{N \times N \times K}$,
\begin{align}
\Big\|\frac{\partial h(\W,\C)}{\partial \W}\Big\|^2
    &= \sum_{k=1}^K \sum_{n=1}^N \sum_{m=1}^N ([\bC_n\bW_n\Rx]_{k,(k-1)N+m})^2 \nonumber\\
    &= \sum_{n=1}^N \sum_{k=1}^K \big( \sum_{m=1}^N  ([\bC_n\bW_n\Rx]_{k,(k-1)N+m})^2 \big) \nonumber\\
    &= \sum_{n=1}^N \sum_{k=1}^K \|[\bC_n\bW_n\Rx]_{k,(k-1)N+1,\ldots,N}\|^2 \nonumber\\
    &= \sum_{n=1}^N \sum_{k=1}^K \|\bc_{n,k}\bW_n\Rx^{[k]}\|^2. 
\end{align}

Using the identity $\|\bA \sbmm{B}\| \leq \|\bA\|_{\rm S} \|\sbmm{B}\|$,
 we have, for all $k\in \kfirst, n \in \nfirst$,
\begin{equation}    \|\bc_{n,k}\bW_n\Rx^{[k]}\|^2 \leq \|\Rx^{[k]}\|_{\rm S}^2 \|\bc_{n,k}\bW_n\|^2 \leq \varrho_{\Rx}^2 \|\bc_{n,k}\bW_n\|^2.
\end{equation}
$\bC_n \bW_n$ is a $K \times KN$ matrix whose $k$-th line is equal to $\bc_{n,k}\bW_n$, hence,
\begin{equation}
\sum_{k=1}^K \|\bc_{n,k}\bW_n\|^2 =  \|\bC_n \bW_n\|^2 \leq \|\bC_n\|_{\rm S}^2 \|\bW_n\|^2.
\end{equation}
Overall, 
\begin{align}
    \Big\|\frac{\partial h(\W,\C)}{\partial \W}\Big\|^2 &\leq  \sum_{n=1}^N \varrho_{\Rx}^2 \sum_{k=1}^K \|\bc_{n,k}\bW_n\|^2 \nonumber\\
    &\leq \varrho_{\Rx}^2 \sum_{n=1}^N \rho_{\C}^2 \|\bW_n\|^2 \nonumber\\
    &= \rho_{\C}^2 \varrho_{\Rx}^2 \|\W\|^2.
\end{align}
This concludes the second part of the Lemma, i.e. $\W \to h(\W,\C)$ is Lipschitz differentiable with constant \eqref{eq:lipW}.


\section{Proof of Lemma \ref{lem:gradC}}
\label{proof gradC}
For all $\W \in \R^{N \times N \times K}$, $h(\W,.)$ in \eqref{eq:hpalm} is quadratic, which yields the expression  \eqref{grad_C J} in a straightforward manner, and concludes the first part of the proof. Let $\W' \in \R^{N \times N \times K}$. According to \eqref{grad_C J}, it can be easily checked that $\nabla_{\C}h(\W',.)$ is Lipschitz with moduli $\alpha$, hence $\C \to h(\W',\C)$ is Lipschitz differentiable with constant \eqref{eq:lipC}, which ends the proof.

\section{proof of Lemma \ref{proxW}}
\label{proof proxW}
Function $f$ in \eqref{eq:fPALM}, reads, equivalently, for all $ \W \in \R^{N \times N \times K}$, $f(\W) = \sum_{k=1}^K \hat{f}(\bW^{[k]})$, where, for all $\sbmm{M} \in \mathbb{R}^{N \times N}$, with singular values $\boldsymbol{\sigma}_{\sbmm{M}}= (\sigma_{\sbmm{M},\ell})_{1 \leq \ell \leq N}$, $\hat{f}(\sbmm{M}) = - \log |\det \sbmm{M}| = \phi_{\hat{f}}(\boldsymbol{\sigma}_{\sbmm{M}})$,
with
\begin{align}
    \phi_{\hat{f}} \colon \R^N &\mapsto (-\infty,+\infty]\nonumber \\ \boldsymbol{\sigma} = (\sigma_1,\ldots,\sigma_N) &\mapsto
    \begin{cases}
        +\infty  ~ \text{if} ~ (\exists \ell \in \{1,\ldots,N\})\; \sigma_{\ell} \leq 0\\
    -\sum_{l=1}^N \log \sigma_{\ell} ~ \text{otherwise}.
    \end{cases}
    \label{eq:phihatf}
\end{align}

Let $c > 0$ and $ \W' \in \R^{N \times N \times K}$. Then,
\begin{equation*}
     cf(\W) + \frac{1}{2} \|\W - \W'\|^2 = \sum_{k=1}^K c\hat{f}(\bW^{[k]}) + \frac{1}{2} \|\bW^{[k]} - \bW^{'[k]}\|^2.
\end{equation*}
Hence, we can minimize the sum by minimizing all its terms independently. In other words, $\W \in \prox_{cf}(\W')$, if and only if, for every $k \in \{1,\ldots,K\}$,
\begin{equation*}
\bW^{[k]} \in \prox_{c\hat{f}}(\bW^{'[k]}).
\end{equation*}
It remains to derive an explicit form for $\prox_{c\hat{f}}$. To do so, let us use the following lemma whose proof is given in \cite{Pesquet} (see also \cite{benfenati_proximal_2018,Bauschke2017} for similar results on the proximity operators of spectral functions).

\begin{lemma}
\label{prox_mat}
Let $N \geq 1$ and $\phi : \R^N \mapsto (-\infty,+\infty]$ be a function whose domain is not empty and is contained in $[0,+\infty)^N$, such that $\phi$ is invariant by any permutation of its arguments and that the proximity operator of $\phi$ is defined.\\
Let $\Phi : \R^{N \times N} \mapsto  (-\infty,+\infty], \sbmm{M} \mapsto \phi(\boldsymbol{\sigma}_{\sbmm{M}})$ where $\boldsymbol{\sigma}_{\sbmm{M}}$ is a vector containing the $N$ singular values of $\mathbf{M}$ in any order.\\
Then, the proximity operator of $\Phi$ is defined, and for any $\sbmm{M}' \in \R^{N \times N}$, whose singular value decomposition reads $\sbmm{M}' = \sbmm{U}_{\sbmm{M}'} \Diag(\boldsymbol{\sigma}_{\sbmm{M}'}) \sbmm{V}_{\sbmm{M}'}^\top$, if $\boldsymbol{\sigma}_{\sbmm{M}} \in \prox_{\phi}(\boldsymbol{\sigma}_{\sbmm{M}'})$, then we have $\sbmm{U}_{\sbmm{M}'} \Diag(\boldsymbol{\sigma}_{\sbmm{M}}) \sbmm{V}_{\sbmm{M}'}^\top \in \prox_{\Phi}(\sbmm{M}')$. 
\end{lemma}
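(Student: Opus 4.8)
The plan is to reduce the matrix proximity computation to the scalar (vector) one, via the variational characterization of the proximity operator combined with von Neumann's trace inequality. Writing the objective as $\frac12\|\sbmm{M}-\sbmm{M}'\|^2 + \Phi(\sbmm{M})$ and expanding the squared Frobenius norm, I would first exploit the unitary invariance of $\|\cdot\|$, which gives $\|\sbmm{M}\|^2 = \|\boldsymbol{\sigma}_{\sbmm{M}}\|^2$ and $\|\sbmm{M}'\|^2 = \|\boldsymbol{\sigma}_{\sbmm{M}'}\|^2$, so that only the cross term $\langle \sbmm{M}\mid\sbmm{M}'\rangle = \tr(\sbmm{M}^\top\sbmm{M}')$ genuinely couples the two matrices.

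The central step is von Neumann's trace inequality: for any $\sbmm{M},\sbmm{M}'$ one has $\tr(\sbmm{M}^\top\sbmm{M}') \le \langle \boldsymbol{\sigma}_{\sbmm{M}}^{\downarrow}\mid \boldsymbol{\sigma}_{\sbmm{M}'}^{\downarrow}\rangle$, the inner product of the singular values sorted in decreasing order, with equality if and only if $\sbmm{M}$ and $\sbmm{M}'$ admit a \emph{simultaneous} singular value decomposition (the same pair of orthogonal factors $\sbmm{U},\sbmm{V}$). Substituting this into the expanded objective and completing the square yields the lower bound $\frac12\|\sbmm{M}-\sbmm{M}'\|^2 + \Phi(\sbmm{M}) \ge \frac12\|\boldsymbol{\sigma}_{\sbmm{M}}^{\downarrow} - \boldsymbol{\sigma}_{\sbmm{M}'}^{\downarrow}\|^2 + \phi(\boldsymbol{\sigma}_{\sbmm{M}})$. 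Since $\phi$ is permutation invariant with domain in $[0,+\infty)^N$, the right-hand side coincides with the objective of the scalar proximity problem for $\phi$ evaluated at $\boldsymbol{\sigma}_{\sbmm{M}'}$, hence is bounded below by $\min_{\boldsymbol{\sigma}}\,\frac12\|\boldsymbol{\sigma}-\boldsymbol{\sigma}_{\sbmm{M}'}\|^2 + \phi(\boldsymbol{\sigma})$.

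It then remains to show the bound is attained. I would take any $\boldsymbol{\sigma}_{\sbmm{M}} \in \prox_\phi(\boldsymbol{\sigma}_{\sbmm{M}'})$, which exists by the standing assumption, and set $\sbmm{M} = \sbmm{U}_{\sbmm{M}'}\Diag(\boldsymbol{\sigma}_{\sbmm{M}})\sbmm{V}_{\sbmm{M}'}^\top$, reusing the singular vectors of $\sbmm{M}'$. By construction this $\sbmm{M}$ realizes the equality case of von Neumann's inequality, so its objective value equals the scalar minimum; it is therefore a minimizer of the matrix problem, which both shows $\prox_\Phi$ is well defined at $\sbmm{M}'$ and delivers the announced formula.

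Main obstacle: the crux is the equality case of von Neumann's trace inequality, specifically arguing that alignment of the singular subspaces is necessary for equality and compatible with sorting the two sets of singular values in the same decreasing order, so that the permutation invariance of $\phi$ lets me identify the reduced scalar objective with $\prox_\phi$ regardless of the arbitrary ordering of $\boldsymbol{\sigma}_{\sbmm{M}}$. A secondary point to treat carefully is the existence/lower-semicontinuity ensuring the scalar prox is attained, and the fact that the statement is phrased as a set membership rather than an equality, which correctly accommodates possible non-uniqueness when $\phi$ is nonconvex.
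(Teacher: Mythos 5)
The paper does not actually prove this lemma in-text: it is stated as an auxiliary result whose proof is deferred to \cite{Pesquet}, with \cite{benfenati_proximal_2018,Bauschke2017} cited for similar results on proximity operators of spectral functions. Your argument --- unitary invariance of the Frobenius norm plus von Neumann's trace inequality to lower-bound the matrix objective by the scalar one, then attainment by reinserting a scalar minimizer into the singular vectors of $\sbmm{M}'$ --- is precisely the standard proof used in those references, and it is correct. Two small remarks. First, the equality case of von Neumann's inequality, which you single out as the main obstacle, is not actually needed: once the lower bound $\inf_{\boldsymbol{\sigma}}\big[\tfrac12\|\boldsymbol{\sigma}-\boldsymbol{\sigma}_{\sbmm{M}'}\|^2+\phi(\boldsymbol{\sigma})\big]$ is established, a direct evaluation of the matrix objective at $\sbmm{M}=\sbmm{U}_{\sbmm{M}'}\Diag(\boldsymbol{\sigma}_{\sbmm{M}})\sbmm{V}_{\sbmm{M}'}^\top$ gives exactly $\tfrac12\|\boldsymbol{\sigma}_{\sbmm{M}}-\boldsymbol{\sigma}_{\sbmm{M}'}\|^2+\phi(\boldsymbol{\sigma}_{\sbmm{M}})$, which equals that infimum since $\boldsymbol{\sigma}_{\sbmm{M}}\in\prox_{\phi}(\boldsymbol{\sigma}_{\sbmm{M}'})$ and $\phi$ is permutation invariant; no analysis of when equality holds is required. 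Second, the hypothesis $\dom\phi\subset[0,+\infty)^N$ is what guarantees that the entries of $\boldsymbol{\sigma}_{\sbmm{M}}$ are nonnegative, so that $\sbmm{U}_{\sbmm{M}'}\Diag(\boldsymbol{\sigma}_{\sbmm{M}})\sbmm{V}_{\sbmm{M}'}^\top$ is a genuine singular value decomposition and $\Phi$ evaluated there really equals $\phi(\boldsymbol{\sigma}_{\sbmm{M}})$ (permutation invariance absorbs the arbitrary ordering); this step deserves to be stated explicitly. Existence of the scalar minimizer is already assumed ($\prox_\phi$ is defined by hypothesis), so no separate lower-semicontinuity argument is needed.
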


Function \eqref{eq:phihatf} is invariant by permutation of its arguments and verifies $\varnothing \neq \dom \phi_{\hat{f}} \subset [0,+\infty)^N$. Moreover, for any $\boldsymbol{\sigma}' = (\sigma_1',\ldots,\sigma_N') \in \R^N$, 
\begin{align}   
    \prox_{c\phi_{\hat{f}}}(\boldsymbol{\sigma}') = ~ &\underset{\boldsymbol{\sigma} \in \R^N}{\argmin} ~ cf(\boldsymbol{\sigma}) + \frac{1}{2} \|\boldsymbol{\sigma} - \boldsymbol{\sigma}'\|^2\nonumber \\ = &\underset{\boldsymbol{\sigma} \in (0,+\infty)^N}{\argmin} \sum_{\ell=1}^N \frac{1}{2}(\sigma_{\ell} - \sigma_{\ell}')^2 - c \log \sigma_{\ell}.
\end{align}
The above prox calculation requires to minimize a sum of functions, each term acting on a different variable, which means we can minimize each term independently. Let $l \in \{1,\ldots,N\}$, then
\begin{align*}
    \forall \sigma_\ell > 0 &: \frac{d \big(\frac{1}{2}(\sigma_\ell - \sigma_\ell')^2 - c \log \sigma_\ell\big)}{d\sigma_\ell} = \sigma_\ell - \sigma_\ell' - c \sigma_\ell^{-1} = 0\\
    \iff &\sigma_\ell^2 - \sigma_\ell' \sigma_\ell - c = 0\\
    \iff &\sigma_\ell \in \Bigg\{ \frac{\sigma_\ell' - \sqrt{(\sigma_\ell')^2 + 4c}}{2}, \frac{\sigma_\ell' + \sqrt{(\sigma_\ell')^2 + 4c}}{2} \Bigg\}.
\end{align*}

As $\sigma_\ell \mapsto \frac{1}{2}(\sigma_\ell - \sigma_\ell')^2 - c\log \sigma_\ell$ diverges toward $+\infty$ in $0^+$ and $+\infty$ and is $C^1$ on $(0,+\infty)$, it must have a minimum where its derivative is equal to $0$. The only point where it happens is $\sigma_\ell = \frac{\sigma_\ell' + \sqrt{(\sigma_\ell')^2 + 4c}}{2}$, so we can conclude that this point is a global minimum.\\

Finally, $\prox_{c\phi_{\hat{f}}}$ is uniquely defined for any $\boldsymbol{\sigma}' \in \R^N$ and has the explicit form:
$$\boldsymbol{\sigma} = \prox_{c\phi_{\hat{f}}}(\boldsymbol{\sigma}') = \frac{\boldsymbol{\sigma}' + \sqrt{(\boldsymbol{\sigma}')^2 + 4c}}{2}$$ where the square and square root operations are applied component-wise. Applying Lemma \ref{prox_mat}, with $\sbmm{M} = \sbmm{W}^{[k]}$, $\sbmm{M}' = \bW^{'[k]}$, for every $k \in \{1,\ldots,K\}$, and $\Phi = \widehat{f}$, allows to conclude the proof.

\section{proof of Lemma \ref{proxC}}
\label{proof proxC}
For every $\C \in \R^{K \times K \times N}$, we can rewrite function \eqref{eq:gPALM}, as $g(\C) = \sum_{n=1}^N \widehat{g} (\bC_n)$ where, for every $\sbmm{M} \in \R^{K \times K}$, with singulare values $\boldsymbol{\sigma}_{\sbmm{M}} =  (\sigma_{\sbmm{M},\ell})_{1 \leq \ell \leq K}$, $\widehat{g}(\sbmm{M}) =  \phi_{\hat{g}}(\boldsymbol{\sigma}_{\sbmm{M}})$, 
with
\begin{align*}
    \phi_{\hat{g}} : \R^K &\to (-\infty,+\infty]\\
    \boldsymbol{\sigma} = (\sigma_1,\ldots,\sigma_K) &\mapsto 
        \begin{cases}
        +\infty  ~ \text{if} ~ (\exists \ell \in \{1,\ldots,K\})\; \sigma_{\ell} \leq 0\\
        -\sum_{l=1}^K \log \sigma_{\ell} ~ \text{otherwise}.
    \end{cases}    
\end{align*}
Moreover, for any $\boldsymbol{\sigma}' \in \R^K$,
\begin{align*}
    \prox_{c\phi_{\hat{g}}}(\boldsymbol{\sigma}') &= \underset{\boldsymbol{\sigma} \in [\epsilon,+\infty)}{\argmin} \frac{1}{2}\sum_{\ell=1}^K (\sigma_\ell - \sigma_\ell')^2 - c\log \sigma_\ell\\
    &= \max \Bigg( \epsilon, \frac{\boldsymbol{\sigma}' + \sqrt{(\boldsymbol{\sigma}')^2+2c}}{2} \Bigg).
\end{align*}
Hence the result, applying Lemma \ref{prox_mat} for every $n \in \{1,\ldots,N\}$, with $\Phi = \hat{g}$, and $(\sbmm{M},\sbmm{M}') = (\sbmm{C}_n,\sbmm{C}_n')$.

\section{Convergence proof}
\label{covergence proof}

\paragraph{Lower-boundedness of the cost function}

Using Theorem \ref{equivalence ML MI},
\begin{multline*}
(\forall (\W,\C) \in \R^{N \times N \times K} \times \R^{K \times K \times N}) \\ J_{\rm IVA-G}(\W,\C) \geq \widetilde{J}_{\rm IVA-G}(\W) + \frac{KN}{2}.
\end{multline*}
 Let us note $\sigma^-$ the smallest eigenvalue of $\Rx$, then $\Rx - \sigma^- \sbmm{I}_{KN}$ is a symmetric matrix whose eigenvalue are non-negative, i.e., $\Rx - \sigma^- \sbmm{I}_{KN}$ is symmetric positive. Therefore, it is also the case of $\bW_n(\Rx - \sigma^- \sbmm{I}_{KN})\bW_n^\top$ for any $n \in \nfirst$, and
 \begin{equation}
 \label{WRW>lambda||W||}
      \bW_n\Rx \bW_n^\top \succeq \sigma^- \bW_n\bW_n^\top, 
 \end{equation}
 where $\succeq$ denotes the Loewner order relationship defined on $(\mathcal{S}_N)^2$ as $(\forall (\sbmm{A}, \sbmm{B}) \in (\mathcal{S}_N)^2) ~ \sbmm{A} \succeq \sbmm{B} \iff \sbmm{A} - \sbmm{B} \in \mathcal{S}_N^+$.\\
 
 Yet, $\bW_n\bW_n^\top$ is a diagonal matrix whose $k$-th coefficient is $\|\bw_n^{[k]}\|^2$. Hence $\det(\bW_n\Rx \bW_n^\top) \geq (\sigma^-)^K\det(\bW_n\bW_n^\top)$ and
\begin{equation}
\label{minoration term 1}
\frac{1}{2} \log\det(\bW_n\Rx \bW_n^\top) \geq \frac{1}{2} K\log(\sigma^-) + \frac{1}{2}\sum_{k=1}^K \log \|\bw_n^{[k]}\|^2.
\end{equation}
Besides, by Hadamard inequality applied to the lines of $\bW^{[k]}$ for any $k \in \kfirst,\, |\det \bW^{[k]}|^2 \leq \prod_{n=1}^N \|\bw_n^{[k]}\|^2$, hence,
\begin{equation}
\label{majoration term 2}
- \log |\det \bW^{[k]}| \geq - \frac{1}{2}\sum_{n=1}^N \log \|\bw_n^{[k]}\|^2.
\end{equation}

By summing \eqref{minoration term 1} for $n \in \nfirst$, and \eqref{majoration term 2} for $k \in \kfirst$, it comes:
\begin{equation}
(\forall \W  \in \R^{N \times N \times K} )\, 
\label{lower bound for Jiva}
\widetilde{J}_{\rm IVA-G}(\W) \geq \frac{KN}{2} (1 + \log(\sigma^-)).
\end{equation}

We can conclude that $J_{\rm IVA-G}$ is lower bounded on $\R^{N \times N \times K} \times \R^{K \times K \times N}$. Finally, since the quadratic regularization term is positive, $J_{\rm IVA-G}^{\text{Reg}}$ is lower bounded too, which ends the proof.

\paragraph{Lipschitz continuity of the gradient}

The expressions of the partial gradients of $h$ given in Lemmas \ref{lem:gradW} and \ref{lem:gradC} show clearly that $\nabla h$ is a $C^1$ function on $\R^{N \times N \times K} \times \R^{K \times K \times N}$. Consequently, the mean value theorem can be applied to show that it has Lipschitz continuity on any bounded domain.

\paragraph{Boundedness of the sequence}

$\phantom{a}$\\
$\bullet$ {Boundedness of \texorpdfstring{$(\C^{(i)})_{i \in \N}$}{Ci}}\\

First, we notice that \eqref{rho_C} defines a norm on $\R^{K \times K \times N}$. For all $(i,j) \in \N^2$ such that $\C^{(i,j)}$ belongs to the sequance generated by $\textbf{PALM-IVA-G}$, let us note:
\begin{align*}
    \overline{\bC}_n^{(i,j)} = \bC_n^{(i,j)} &- c_{\C}(\alpha(\Diag( \bC_n^{(i,j)}) - \sbmm{I}_K)\\
    &- \frac{1}{2} \bW_n^{(i+1)}\Rx\bW_n^{(i+1)\top}).
\end{align*}

\begin{align}
\label{majoration C_n i+1}
\|\bC_n^{(i,j+1)}\|_{\rm S} &= \max \Big( \epsilon,\frac{\|\overline{\bC}_n^{(i,j)}\|_{\rm S} + \sqrt{\|\overline{\bC}_n^{(i,j)}\|_{\rm S}^2+2c_{\C}}}{2} \Big) \nonumber \\
&\leq \|\overline{\bC}_n^{(i,j)}\|_{\rm S} + \sqrt{\frac{c_\C}{2}},
\end{align}
assuming that $\epsilon < \sqrt{\frac{c_\C}{2}}$, which is verified in our experimental settings.

As $\bW_n^{(i+1)}\Rx\bW_n^{(i+1)\top}$ is symmetric positive,
\begin{align}
\label{majoration C_n i bar}
    \nonumber \|\overline{\bC_n}^{(i,j)}\|_{\rm S} &\leq \| \bC_n^{(i,j)} - \gamma_\C (\Diag( \bC_n^{(i,j)}) - \sbmm{I}_K)\|_{\rm S}\\
    &= \| \bC_n^{(i,j)} - \gamma_\C  \Diag( \bC_n^{(i,j)})\|_{\rm S} + \gamma_\C.
\end{align}

Let $\bu \in \R^K$ be a unit norm vector. Using the inequality for matrices of $\S^+_K, \forall (k,l) \in \{1,\ldots,K\}\, |c_{n,k,l}^{(i,j)}| \leq \sqrt{c_{n,k,k}^{(i,j)}}\sqrt{c_{n,l,l}^{(i,j)}}$, it comes
\begin{align*}
    &\bu^\top  \bC_n^{(i,j)} \bu\\
    = &\sum_{1 \leq k,l \leq K} c_{n,k,l}^{(i,j)} u_k u_l \leq \sum_{1 \leq k,l \leq K} |c_{n,k,l}^{(i,j)}| |u_k| |u_l|\\
    \leq &\sum_{1 \leq k,l \leq K} \sqrt{c_{n,k,k}^{(i,j)}} |u_k| \sqrt{c_{n,l,l}^{(i,j)}} |u_l|\\
    \leq &\sum_{1 \leq k,l \leq K} \frac{c_{n,k,k}^{(i,j)} u_k^2}{2} + \frac{c_{n,l,l}^{(i,j)} u_l^2}{2} = K \bu^\top \Diag(\bC_n^{(i,j)}) \bu.
\end{align*}
It follows that, for all $\bu \in \R^K$ such that $\|\bu\| = 1$, $\bu^\top \Big( \bC_n^{(i,j)} - \gamma_\C \Diag(\bC_n^{(i,j)}) \Big) \bu \leq (1 - \frac{\gamma_\C}{K}) \bu^\top \bC_n^{(i,j)} \bu \leq (1 - \frac{\gamma_\C}{K})  \|\bC_n^{(i,j)}\|_{\rm S}$. Combining this result with \eqref{majoration C_n i+1} and \eqref{majoration C_n i bar}, we deduce that $\|\bC_n^{(i,j+1)}\|_{\rm S} \leq (1 - \frac{\gamma_\C}{K})  \|\bC_n^{(i,j)}\|_{\rm S} + \gamma_\C + \sqrt{\frac{c_\C}{2}}$, then we can prove by recurrence that for all $(i,j)$, $\|\bC_n^{(i,j)}\|_{\rm S} \leq \max \big( \|\bC_n^{(0)}\|_{\rm S},K(1 + \sqrt{\frac{1}{2 \alpha \gamma_\C}}) \big)$. Consequently, if we define 
\begin{equation}
\label{rho_bar}
\bar{\rho} = \max \big( \|\rho_{C^{(0)}}\|_{\rm S},K(1 + \sqrt{\frac{1}{2 \alpha \gamma_\C}}) \big),
\end{equation}

 $\bar{\rho}$ is an upper bound for $(\rho_{\C^{(i)}})_{i \in \N}$, which proves that $(\C^{(i)})_{i \in \N}$ is bounded.\\

$\bullet$ {Boundedness of \texorpdfstring{$(\W^{(i)})_{i \in \N}$}{Wi}}\\
Again, we fix $i \in \N$, the proximal operator used in Algorithm \ref{PALM-IVA-G algorithm} ensures that $\C^{(i)} \in (\epsilon \bI_K + \S^+_K)^N$. Thus, $(\forall n \in \nfirst) \, \tr(\bC^{(i)}_n \bW^{(i)}_n \Rx \bW^{(i)\top}_n) \geq \epsilon \tr (\bW^{(i)}_n \Rx \bW^{(i)\top}_n) \geq \epsilon \sigma^- \|\bW^{(i)}_n\|^2$ using \eqref{WRW>lambda||W||}. By summing these inequalities, 
\begin{equation}
\label{minoration h(W,C)}
    \frac{1}{2}\sum_{n=1}^N \tr(\bC^{(i)}_n \bW^{(i)}_n \Rx \bW^{(i)\top}_n) \geq \frac{\epsilon \sigma^-}{2} \|\W^{(i)}\|^2.
\end{equation}
Besides, let us consider the concave inequality on the logarithm function at point $\frac{2}{\epsilon \sigma^-} > 0$,
 \begin{equation}
 \label{logconcave}
 (\forall x \in (0,+\infty))\quad \log x \leq \log \frac{2}{\epsilon \sigma^-} + \frac{\epsilon \sigma^-}{2} x - 1
. \end{equation}

Taking $k \in \kfirst$ and summing \eqref{logconcave} applied  on $\|\bw_n^{(i)[k]}\|$ for $n \in \nfirst$, we obtain
\begin{equation}
    \sum_{n=1}^N \log \|\bw_n^{(i)[k]}\| \leq N (\log \frac{2}{\epsilon \sigma^-} -1) + \frac{\epsilon \sigma^-}{2} \|\bW^{(i)[k]}\|^2.
\end{equation}
And combining this inequality with \eqref{minoration term 1} gives
\begin{equation*}
    -\log |\det \bW^{(i)[k]}| \geq - \frac{\epsilon \sigma^-}{4} \|\bW^{(i)[k]}\|^2  - \frac{N}{2} (\log \frac{2}{\epsilon \sigma^-} -1).
\end{equation*}
Finally, summing for $k \in \kfirst$,
\begin{equation}
\label{minoration f(W)}
    - \sum_{k=1}^K \log |\det \bW^{(i)[k]}| \geq - \frac{\epsilon \sigma^-}{4} \|\W^{(i)}\|^2 - \frac{KN}{2} (\log \frac{2}{\epsilon \sigma^-} -1).
\end{equation}
Summing \eqref{minoration f(W)} and \eqref{minoration h(W,C)} yields
\begin{align*}
    &\frac{\epsilon \sigma^-}{4} \|\W^{(i)}\|^2 - \frac{KN}{2} (\log \frac{2}{\epsilon \sigma^-} -1)\\
    &\leq \frac{1}{2} \sum_{n=1}^N \tr(\bC^{(i)}_n \bW^{(i)}_n \Rx \bW^{(i)\top}_n) - \sum_{k=1}^K \log |\det \bW^{(i)[k]}|\\
    &= J_{\rm IVA-G}(\W^{(i)},\C^{(i)}) + \frac{1}{2} \sum_{n=1}^N \log \det \bC^{(i)}_n\\
    &\leq J_{\rm IVA-G}(\W^{(i)},\C^{(i)}) + \frac{KN}{2} \log \bar{\rho}.
\end{align*}

Following the proof of \cite[Lemma 2]{bolte2014proximal}, it is sufficient that the first four items of Assumption \ref{as:Palm} hold to obtain that $J_{\rm IVA-G}^{\text{Reg}}(\W^{(i)},\C^{(i)})$ is a non-increasing sequence. We thus have

\begin{align}
     \|\W^{(i)}\|^2 \leq \frac{4}{\epsilon \sigma^-} \Big(J_{\rm IVA-G}^{\text{Reg}}(\W^{(0)},\C^{(0)}) + \frac{KN}{2} \log \bar{\rho} (\frac{2}{\epsilon \sigma^-} -1) \Big).
\end{align}

This shows that $(\W^{(i)})_{i \in \N}$ is bounded too.



\paragraph{Bounds for the corrected Lipschitz moduli}

Using ~\eqref{eq:lipW}, we can set $\lambda_\W^- = \epsilon \varrho_{\Rx}$ and $\lambda_\W^+ = \bar{\rho} \varrho_{\Rx}$. We can also set $\lambda_\C^- = \lambda_\C^+ = \alpha$.

\paragraph{KL property}

KL property is a key tool from functional analysis to demonstrate convergence of iterates in the non-convex setting~\cite{bolte2014proximal}. It is sufficient here to apply the result from \cite[Section 4.3]{PAM_Proj}, which states that a function verifies KL on the domain of its subdifferential, if it is lower semi-continuous, proper, and definable in an o-minimal structure. 

We rely on the o-minimal structure $\mathfrak{S}(\R_{\rm an,exp})$, defined in \cite[Section 2, Example (6)]{van_den_dries_geometric_1996}, which contains $\log, \exp$ and all semi-algebraic functions. The properties in \cite[Section 5]{van_den_dries_geometric_1996} can then be used to show that $\mathfrak{S}(\R_{\rm an,exp})$ contains our cost function $J_{\rm IVA-G}^{\text{Reg}}$, after identifying $\R^{N \times N \times K} \times \R^{K \times K \times N}$ with $\R^{KN(K+N)}$.

\end{appendices}

\section*{Acknowledgment}
The work of E.C. and C.C. is funded by the European Research Council Starting Grant MAJORIS ERC-2019-STG-850925.

\def\url#1{}
\bibliographystyle{IEEEtran}
\bibliography{IEEEabrv,Bibliography}

\end{document}